\long\def\symbolfootnote[#1]#2{\begingroup\def\thefootnote{\fnsymbol{footnote}}
\footnote[#1]{#2}\endgroup}
\newtheorem{theorem}{Theorem}
\newtheorem{corollary}[theorem]{Corollary}
\newtheorem{lemma}[theorem]{Lemma}
\numberwithin{equation}{section}
\begin{document}
\title{Holomorphic functions on certain  K\"{a}hler manifolds}

\author{Ovidiu Munteanu and  Jiaping Wang}
\maketitle
\begin{abstract}
We investigate Liouville theorems and dimension estimates for the space of exponentially growing
holomorphic functions on complete K\"{a}hler manifolds. While our work is  motivated by the 
study of gradient Ricci solitons in the theory of Ricci flow, the most general results we prove here
do not require any knowledge of curvature. 
\end{abstract}

\section{Introduction}

\symbolfootnote[0]{The first author partially supported by NSF grant No.
DMS-1262140 and the second author by NSF grant No. DMS-1105799}

On the complex Euclidean space $\mathbb{C}^n,$ the classical Liouville
theorem says that any bounded holomorphic function must be a constant. More
generally, any holomorphic function of polynomial growth is necessarily a
polynomial. In particular, this implies that the space of holomorphic
functions with any fixed polynomial growth order is of finite dimension. It
also shows that the ring consisting of all polynomial growth holomorphic
functions is finitely generated by the coordinate functions.

On general complete K\"ahler manifolds, it is obviously of great interest to
address these issues. Under suitable curvature assumptions, there are
satisfactory results concerning finite dimensionality of the space of
polynomial growth holomorphic functions. Indeed, on a complete K\"ahler
manifold with nonnegative Ricci curvature, by the well known result of Yau
\cite{Y1} and the fact that holomorphic functions are harmonic, any bounded
holomorphic function must be a constant. More generally, as a consequence of
the results of Colding and Minicozzi \cite{CM} and P. Li \cite {L1} on
polynomial growth harmonic functions, the space of polynomial growth
holomorphic functions of any fixed order is necessarily of finite
dimension. In fact, in a more recent work of L. Ni \cite{N}, a sharp
dimension upper bound together with a rigidity result was established for
such spaces by assuming instead that the bisectional curvature is
nonnegative. However, the more significant question of whether the ring of
polynomial growth holomorphic functions is finitely generated on a complete
K\"ahler manifold with nonnegative bisectional curvature still remains
unresolved. This question has been raised by Yau \cite{Y2} and seems to be
motivated by the uniformization conjecture \cite{Y2} that such K\"ahler
manifold is biholomorphic to $\mathbb{C}^n.$ We refer to the interesting
work of Mok \cite{M} for progress and further information.

Our purpose here is to establish some Liouville type results on K\"ahler
manifolds without involving curvature conditions. This is largely motivated
by the consideration of the so-called gradient K\"ahler Ricci solitons.
Recall that Riemannian manifold $(M, g)$ is a gradient Ricci soliton if
there exists a smooth function $f\in C^{\infty }\left( M\right),$ called the
potential function of the soliton, such that the following equation holds
true for some constant $\lambda.$
\begin{equation*}
\mathrm{Ric}+\mathrm{Hess}\left( f\right)=\lambda\,g.
\end{equation*}
Here, Ric denotes the Ricci curvature of $M$ and Hess(f) the hessian of
function $f.$ The soliton is called shrinking, steady or expanding,
respectively, if $\lambda>0,$ $\lambda=0$ or $\lambda<0.$ Ricci solitons are
simply the self-similar solutions to the Ricci flows (see \cite{CLN}). They
may also be viewed as natural generalization of Einstein manifolds. In the
case $M$ is a K\"ahler manifold, Ricci solitons are called K\"ahler Ricci
solitons. With respect to unitary frames, the defining equation for gradient
K\"ahler Ricci soliton can then be written into
\begin{eqnarray*}
R_{\alpha \overline{\beta }}+f_{\alpha \overline{\beta }}&=&\lambda\,
\delta_{\alpha \overline{\beta}} \\
f_{\alpha \beta } &=&0.
\end{eqnarray*}

The important fact to our consideration here is that the potential function $%
f$ satisfies $f_{\alpha \beta }=0$ on a gradient K\"{a}hler Ricci soliton.
This condition may be rephrased (see \cite{Ca}) as $\nabla f$ being the real part of a holomorphic vector field, or as
$J(\nabla f)$ being a Killing vector field on $M$.
As we shall see later, the existence of such a function $f$ leads to
Liouville type results without involving any curvature conditions.

In the following and throughout the paper, we denote by $r(x)$ the distance
function from $x$ to a fixed point $p$ on manifold $M.$ If the volume $%
V_p(R) $ of the geodesic ball $B_p(R)$ in $M$ satisfies

\begin{equation*}
V_p(R)\le C\, e^{a\,R}
\end{equation*}
for all $R>0,$ where $C$ and $a$ are constants, then $M$ is said to have
exponential volume growth with rate $a.$ If instead one has

\begin{equation*}
V_p(R)\le C\, (R+1)^m
\end{equation*}
for all $R>0,$ where $C$ and $m$ are constants, then $M$ is said to have
polynomial volume growth of order $m.$

Also, we denote by $E(d)$ the space of holomorphic functions $u$ on $M$ of
exponential growth rate at most $d,$ that is,
\begin{equation*}
|u|(x)\leq c\, e^{d\,r(x)}
\end{equation*}
for $x\in M.$

Similarly, $P(d)$ is the space of holomorphic functions $u$ on $M$ of
polynomial growth order at most $d,$ i.e., for some constant $c,$
\begin{equation*}
|u|(x)\leq c\, \left(r(x)+1\right)^d
\end{equation*}
for $x\in M.$

\begin{theorem}
\label{holo}Let $M$ be a complete K\"{a}hler manifold. Assume that there
exists a proper function $f$ on $M$ such that $f_{\alpha \beta }=0$ with
respect to unitary frames. Then for all $d>0,$

(a) $\dim E(d)<\infty$ if $|\nabla f|$ is bounded and $M$ has exponential
volume growth.

(b) $\dim P(d)<\infty$ if $|\nabla f|$ grows at most linearly and $M$ has
polynomial volume growth.
\end{theorem}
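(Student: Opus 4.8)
The plan is to run the dimension-counting scheme of Li and of Colding--Minicozzi, using the potential function $f$ in place of the curvature hypotheses that scheme normally needs. I describe part (a); part (b) runs in parallel, with exponential bounds replaced throughout by polynomial ones.

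Two preliminary reductions are essentially free. First, as recalled in the introduction, $f_{\alpha\beta}=0$ means that the $(1,0)$-part $V$ of $\nabla f$ is a holomorphic vector field and $\nabla f=V+\overline V$; hence for holomorphic $u$ the function $Vu=\langle\nabla f,\nabla u\rangle$ is again holomorphic, with $|Vu|\le|\nabla f|\,|\nabla u|$. Since $|\nabla f|$ is bounded, $\nabla f$ is complete, its flow $\psi_t$ consists of biholomorphisms that displace points at bounded speed, and therefore $\psi_t^*$ preserves $E(d)$, with $V$ as its infinitesimal generator. Second, holomorphic functions are harmonic, so the Caccioppoli inequality (no curvature needed) gives, for $u\in E(d)$,
\[
  \int_{B_p(R)}|\nabla u|^2 \le \frac{C}{R^2}\int_{B_p(2R)}|u|^2 \le C_1\,e^{(4d+2a)R},
\]
using $|u|\le c\,e^{dr}$ and the exponential volume growth; combined with the first reduction this also bounds $\int_{B_p(R)}|Vu|^2$.

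The decisive step is a \emph{mean value inequality}: since $|u|^2$ is subharmonic ($\Delta|u|^2=2|\nabla u|^2\ge0$), one wants
\[
  \sup_{B_p(R)}|u|^2 \le \frac{C(R)}{V_p(2R)}\int_{B_p(2R)}|u|^2
\]
with $C(R)$ of at most exponential growth, and \emph{without} curvature assumptions --- note that neither volume doubling nor a Poincar\'e inequality, the usual sources of such a bound, is available here. This is exactly where the hypotheses on $f$ must be exploited: properness of $f$ together with $f_{\alpha\beta}=0$ (equivalently, $J\nabla f$ Killing) imposes enough rigidity on the sublevel sets $\{f\le t\}$ that a De Giorgi--Nash--Moser iteration can be run on them with controlled constants, using cut-offs built from $f$ and the holomorphicity of $\langle\nabla f,\nabla u\rangle$. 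Granting this, the two displayed estimates give, for every $0\ne u\in E(d)$ and all large $R$, a doubling bound $\int_{B_p(2R)}|u|^2\le A(R)\int_{B_p(R)}|u|^2$ with $A(R)$ \emph{independent of $u$}; feeding this and the (non-doubling, but exponentially controlled) volume growth into the linear-algebra lemma of Li --- a $k$-dimensional function space must contain an element violating any such uniform doubling bound once $k$ is large --- forces $\dim E(d)<\infty$.

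Part (b) is identical in structure: $|\nabla f|$ of linear growth still gives a flow preserving $P(d)$, Caccioppoli still applies, and the mean value inequality and Li's lemma are used with polynomial weights, yielding $\dim P(d)<\infty$ (indeed with a bound polynomial in $d$). The main obstacle is the curvature-free mean value inequality; in the write-up I would isolate it as a lemma and devote the bulk of the work to it, taking care that the $f$-adapted cut-offs interact well with the holomorphic vector field $V$ so that the iteration constants stay bounded.
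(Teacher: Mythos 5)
There is a genuine gap --- in fact two, and together they mean the proposal never engages with the mechanism that actually makes the theorem work. First, the step you yourself flag as decisive, a curvature-free mean value inequality $\sup_{B_p(R)}|u|^2\le \frac{C(R)}{V_p(2R)}\int_{B_p(2R)}|u|^2$ valid at \emph{all} scales $R$ with controlled $C(R)$, is only asserted ("properness of $f$ \dots imposes enough rigidity \dots that a De Giorgi--Nash--Moser iteration can be run"); no argument is given, and without volume doubling or a Poincar\'e inequality there is no reason to expect such a global statement. The paper sidesteps this entirely: its final doubling-type estimate lives between two \emph{fixed} nested compact sublevel sets $D(t_0-\varepsilon)\subset D(t_0)$ of $f$, so the mean value inequality is only ever needed on one fixed compact set, where it is standard local elliptic theory with constants depending on the local geometry of that set --- which is permitted, since the theorem only claims finiteness, not a uniform bound.

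Second, and more seriously, your claim that "the two displayed estimates give \dots a doubling bound $\int_{B_p(2R)}|u|^2\le A(R)\int_{B_p(R)}|u|^2$ with $A(R)$ independent of $u$" does not follow. Caccioppoli bounds $\int|\nabla u|^2$ from above, and the growth hypothesis bounds $\int_{B_p(2R)}|u|^2$ in terms of the constant $c$ in $|u|\le c\,e^{dr}$, which is not controlled by $\int_{B_p(R)}|u|^2$; neither gives the required \emph{lower} bound on the inner integral. Producing such a reverse estimate, uniform over the function space, is the entire difficulty, and it is exactly what the paper's central construction delivers: one iterates $u_{k+1}=\langle\nabla u_k,\nabla f\rangle$ (holomorphic by $f_{\alpha\beta}=0$), proves via Green's identity on sublevel sets of $f$ that the quantities $\int_{\partial D(s)}|u_k|^2|\nabla f|^{-1}$ are log-convex in $k$, and plays the resulting geometric growth of these boundary integrals against the at-most-geometric growth of $\int_{D(t)}|u_k|^2$ (which is where Caccioppoli and the volume/growth hypotheses enter) to conclude $\int_{\partial D(r)}|u_1|^2|\nabla f|^{-1}\le\mu\int_{\partial D(r)}|u_0|^2|\nabla f|^{-1}$. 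An integration by parts then converts this into the fixed-compact-set doubling estimate $\int_{D(t_0)}|u|^2\le e^{\varepsilon C(t_0)}\int_{D(t_0-\varepsilon)}|u|^2$, after which Li's lemma and a local Moser iteration finish the proof. None of this iteration/log-convexity structure appears in your proposal, and the flow $\psi_t$ of $\nabla f$ that you introduce in its place is not used in any substantive way. You would need to either supply the asserted global mean value inequality and a genuine uniform doubling bound, or (better) localize to the sublevel sets of $f$ and prove the log-convexity estimate on their boundaries.
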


Here, $|\nabla f|$ is said to grow at most linearly if $|\nabla f|(x)\le
b\,(r(x)+1)$ on $M$ for some constant $b.$ Note that part (a) of the theorem
immediately implies that the space $E(d_0)$ only consists of the constant
functions for some $d_0>0.$ Indeed, one may take $d_0=1/k,$ where $k=\dim
E(1).$ To see that $\dim E(d_0)=1,$ note that $1, u, u^2,\cdots,u^k$ are
linearly independent and belong to $E(1)$ for a nonconstant function $u\in
E(d_0).$ This shows that $\dim E(1)\geq k+1,$ an obvious contradiction.
Similarly, part (b) of the theorem implies that any bounded holomorphic
function must be constant.

Let us also remark that it is possible to make the dimension estimates
explicit in Theorem \ref{holo}, as to establish these results we have used a direct argument.
 However, in both cases, the estimates depend
on the local geometry of $M$ and behavior of $f.$

As mentioned above, an important class of examples to which the theorem
applies is the gradient K\"{a}hler Ricci solitons. There, the potential
function $f$ automatically satisfies $f_{\alpha \beta}=0$ with respect to
unitary frames. 

In the case of the steady solitons, according to \cite{MS},
the volume growth is subexponential, that is, it is of exponential growth
with arbitrarily small rate. It is also well known that $\left\vert \nabla
f\right\vert$ is bounded as $S+\left\vert \nabla f\right\vert ^{2}$ is a
constant by a result of Hamilton and the scalar curvature $S$ is nonnegative by Chen \cite{C}.

Therefore, the following corollary follows from part (a) of Theorem \ref{holo}.

\begin{corollary}
Let $\left( M,g,f\right) $ be a gradient K\"{a}hler Ricci steady soliton
with proper potential function $f$. Then the space of exponential growth
holomorphic functions satisfies $\dim E(d)<\infty $ for all $d>0.$ In
particular, any subexponentially growing holomorphic function on $M$ must be a
constant.
\end{corollary}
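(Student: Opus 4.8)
The plan is to prove (a) in detail; part (b) runs in parallel, with polynomial weights $(1+r)^d$ in place of the exponential ones.

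First I would record the consequences of $f_{\alpha\beta}=0$. Writing $V$ for the $(1,0)$-part $f^{\alpha}\partial_{\alpha}$ of $\nabla f$, the vanishing of $f_{\bar\alpha\bar\beta}$ makes $V$ a \emph{holomorphic} vector field, while $f_{\alpha\beta}=0$ is equivalent to $J\nabla f$ being a Killing field. Since $|\nabla f|$ is bounded and $M$ is complete, the gradient flow $\Phi_s$ of $f$ and the isometric flow $\psi_\theta$ of $J\nabla f$ are both complete; a short computation using that $\mathrm{Hess}\,f$ is $J$-invariant gives $[\nabla f, J\nabla f]=0$, so the two flows commute and together generate an $\mathbb{R}^2$-action on $M$ by biholomorphisms. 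For a holomorphic $u$ one checks directly that $V(u)=(\nabla f)(u)$ is again holomorphic and that $(J\nabla f)(u)=i\,(\nabla f)(u)$, whence $\tfrac{d}{ds}(u\circ\Phi_s)=(V(u))\circ\Phi_s$. The quantitative point is that $V$ preserves $E(d)$ with controlled norm: $|V(u)|\le|\nabla f|\,|\nabla u|\le b\,|\nabla u|$, and an interior gradient (equivalently Cauchy) estimate on unit balls, whose constant is governed by the local geometry, bounds $|\nabla u|(x)$ by $\sup_{B_x(1)}|u|$; hence $V(E(d))\subseteq E(d)$.

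Second, I would use the $\mathbb{R}^2$-action to pin down $E(d)$. Replacing $f$ by $-f$ if needed, assume $f$ is bounded below, so it attains its minimum at a point $p_0$ with $\nabla f(p_0)=0$, a fixed point of the action. Near $p_0$ one normalizes the action: the isometry $\psi_\theta$ is linear in geodesic normal coordinates, and $V$, being holomorphic, vanishing at $p_0$, and commuting with $\psi_\theta$, can be brought to a convenient (at worst polynomial, generically linear) form whose linearization has eigenvalues $a_1,\dots,a_n$ with $\mathrm{Re}(a_j)\ge 0$, since $\Phi_s$ increases $f$ away from its minimum. If $u\in E(d)$ has vanishing order $m$ at $p_0$, its leading Taylor term is amplified by a factor of size $e^{\,s\,\mathrm{Re}(\sum_j a_j\alpha_j)}$ under $\Phi_s$, while $|u\circ\Phi_s|\le c\,e^{d\,r(\Phi_s(\cdot))}\le c\,e^{d(\mathrm{const}+bs)}$ because $\Phi_s$ displaces points at speed at most $b$; letting $s\to\infty$ bounds $m$ in terms of $d$, $b$, and $\min_j\mathrm{Re}(a_j)$. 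Thus the $m_0$-jet map $u\mapsto j^{m_0}_{p_0}u$ is injective on $E(d)$ for a suitable $m_0$, giving $\dim E(d)\le\binom{n+m_0}{n}$, an explicit bound depending only on $n$, $d$, and the behaviour of $f$ near $p_0$.

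I expect two sources of difficulty. The genuine obstacle is the degenerate case: when $\mathrm{Hess}\,f$ is not positive definite at $p_0$ some $a_j$ has vanishing real part and the jet argument collapses; one then falls back on a ``direct'' integral-estimate scheme in the spirit of P.\ Li and Colding--Minicozzi — exhaust $M$ by the compact sublevel sets $\{f<t\}$ (using properness of $f$), control $\int u\,\bar v$ along the flow via $(J\nabla f)u=i(\nabla f)u$, apply a mean value inequality to the subharmonic functions $|u|^2$, and invoke the \emph{exponential volume growth} hypothesis to bound the volume ratios $\mathrm{Vol}(\{f<2t\})/\mathrm{Vol}(\{f<t\})$ so that the determinant/iteration argument terminates with a finite bound. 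This is presumably the route that makes all of the hypotheses — local geometry, $|\nabla f|$ bounded, and exponential volume growth — genuinely necessary, and it is the version I would write up carefully. The second, more routine, point is the case in which $f$ is unbounded in both directions and the action has no fixed point: there one fixes a regular level set and a base point on it and repeats the integral argument, properness of $f$ still supplying compact reference domains. Part (b) is identical save that one tracks polynomial weights and uses the linear growth of $|\nabla f|$ together with polynomial volume growth in the volume-ratio step.
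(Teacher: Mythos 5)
There is a genuine gap, and it comes in two layers. First, the statement you were asked to prove is the \emph{corollary} about steady solitons, whose entire content (given Theorem \ref{holo}) is the verification that a steady soliton with proper potential satisfies the hypotheses of part (a), plus the passage from $\dim E(d)<\infty$ to the Liouville conclusion. You address neither. For the hypotheses one needs that $\left\vert \nabla f\right\vert$ is bounded --- this follows from Hamilton's identity $S+\left\vert \nabla f\right\vert ^{2}=\mathrm{const}$ together with Chen's result $S\geq 0$ --- and that the volume growth is (sub)exponential, which is the result of Munteanu--Sesum cited as \cite{MS}; neither is automatic and neither appears in your write-up. For the Liouville statement one uses the elementary observation that if $u\in E(1/k)$ is nonconstant with $k=\dim E(1)$, then $1,u,u^{2},\dots ,u^{k}$ are linearly independent elements of $E(1)$, a contradiction; a subexponentially growing $u$ lies in every $E(d)$ and is therefore constant. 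This step is also missing.

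Second, your substitute proof of Theorem \ref{holo}(a) breaks down precisely in the case you defer. The nondegenerate jet argument at a critical point of $f$ is an interesting idea, but it requires $f$ to attain a minimum, the linearization of the holomorphic field $V$ to be tame (resonances and non-semisimple parts are nontrivial to handle), and all eigenvalues to have positive real part; you concede this can fail. Your fallback --- a Colding--Minicozzi/Li-type iteration --- does not close under the stated hypotheses: that scheme needs volume doubling and a uniform mean value inequality, and under merely exponential volume growth with no curvature control the volume ratios and the local gradient estimate $\left\vert \nabla u\right\vert (x)\leq C\sup_{B_{x}(1)}\left\vert u\right\vert$ (whose constant depends on local geometry that may degenerate at infinity) are not uniformly bounded, so the iteration does not terminate. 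The paper's mechanism is different and curvature-free: it forms the iterated derivatives $u_{k+1}=\langle \nabla u_{k},\nabla f\rangle$, derives from Green's formula the identity $\int_{\partial D(s)}\bigl( u_{k}\overline{u_{k+2}}-\left\vert u_{k+1}\right\vert ^{2}\bigr) \left\vert \nabla f\right\vert ^{-1}=0$, telescopes the resulting Cauchy--Schwarz inequalities to obtain the three-circle-type bound $\rho (r)\leq \mu$ of Lemma \ref{l2}, and controls $\int_{B_{p}(r)}\left\vert u_{k}\right\vert ^{2}\leq C_{0}\mu ^{k}$ by an integrated Caccioppoli estimate (Lemma \ref{l3}) that uses only the boundedness of $\left\vert \nabla f\right\vert$ and the volume growth, never a pointwise gradient estimate. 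This yields $\int_{D(t_{0})}\left\vert u_{0}\right\vert ^{2}\leq e^{\varepsilon C}\int_{D(t_{0}-\varepsilon )}\left\vert u_{0}\right\vert ^{2}$ uniformly over $E(d)$, after which Li's lemma applies on the fixed compact set $D(t_{0})$. None of this machinery is present in your proposal, so the general case remains unproved.
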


The assumption that $f$ is proper is indeed necessary in the corollary. This
is because $\mathbb{C}^{n}$ with $f\left( x\right) =\left\langle
a,x\right\rangle +b$ is a gradient K\"{a}hler Ricci steady soliton.
Obviously, there are nontrivial polynomial growth holomorphic functions. On
the other hand, the well known example of the K\"{a}hler Ricci steady
soliton, the cigar soliton, given by
\begin{equation*}
\left( \mathbb{R}^{2},\frac{dzd\overline{z}}{1+\left\vert z\right\vert ^{2}}%
,-\ln \left( 1+\left\vert z\right\vert ^{2}\right) \right) ,
\end{equation*}%
admits nonconstant holomorphic functions $u\left( z\right) =cz.$ It can be
easily checked that $u$ grows exponentially. In this case, the potential
function $f$ is proper. The two examples show the sharpness of the corollary.
In passing, we note that by \cite {MS}, there is no nonconstant holomorphic function
with finite Dirichlet energy on any gradient  steady Ricci soliton. 

We would like to mention that under more stringent assumptions that the Ricci curvature 
is positive and the scalar curvature achieves its maximum, a gradient K\"{a}hler 
steady Ricci soliton is biholomorphic to $\mathbb{C}^n$. This result was proved by 
Chau and Tam \cite {CT} and Bryant \cite {B}. The proof in \cite {B} is by constructing
a global holomorphic coordinate system ${z_1,...,z_n}$ on $M$ directly. These coordinate 
functions are shown to be of exponential growth on $M$. It is not difficult to see from there that any holomorphic function $u$ 
of exponential growth on $M$ is necessarily a polynomial of $z_1,...,z_n$. In particular,
this implies the corollary and also that the ring of exponential growth holomorphic functions on $M$ is finitely generated.

For both shrinking and expanding gradient Ricci solitons, the gradient of
the potential function $f$ grows at most linearly. So part (b) of Theorem %
\ref{holo} leads directly to the following conclusion for gradient K\"ahler
Ricci expanding solitons.

\begin{corollary}
Let $\left( M,g,f\right) $ be a gradient K\"{a}hler Ricci expanding soliton
with proper potential function $f$ and polynomial volume growth. Then the
space of polynomial growth holomorphic functions satisfies $\dim P(d)<\infty$
for all $d>0.$
\end{corollary}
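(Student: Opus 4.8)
The plan is to obtain this corollary directly from part~(b) of Theorem~\ref{holo}; the task is simply to check that a gradient K\"ahler Ricci expanding soliton with the stated hypotheses satisfies the assumptions of that part. Of the four ingredients needed there, two are assumed outright in the corollary: that the potential $f$ is proper and that $M$ has polynomial volume growth. A third, that $f$ satisfies $f_{\alpha\beta}=0$ with respect to unitary frames, is built into the defining equations of a gradient K\"ahler Ricci soliton recalled in the Introduction and holds for every value of $\lambda$. Hence the only substantive point is to verify that $|\nabla f|$ grows at most linearly when $\lambda<0$.

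For that I would invoke the standard soliton identities. The contracted second Bianchi identity applied to $\mathrm{Ric}+\mathrm{Hess}(f)=\lambda\,g$ gives $\nabla S=2\,\mathrm{Ric}(\nabla f)$, while $\mathrm{Hess}(f)=\lambda\,g-\mathrm{Ric}$ directly yields $\tfrac12\nabla|\nabla f|^2=\mathrm{Hess}(f)(\nabla f)=\lambda\nabla f-\mathrm{Ric}(\nabla f)$. Adding these, $\nabla\big(|\nabla f|^2+S-2\lambda f\big)=0$, so that
\[
|\nabla f|^2 = 2\lambda f + c_0 - S
\]
on $M$ for some constant $c_0$.

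I would then combine this with two known facts about expanding solitons: the scalar curvature is bounded below, $S\ge n\lambda$, so $-S\le n|\lambda|$; and the potential function has at most quadratic growth, $|f|(x)\le C\,(r(x)+1)^2$, the analogue for $\lambda<0$ of the Cao--Zhou estimate in the shrinking case. Since $\lambda<0$ we have $2\lambda f=2|\lambda|(-f)\le 2|\lambda|\,|f|$, and substituting into the displayed identity gives $|\nabla f|^2\le C'\,(r(x)+1)^2$; that is, $|\nabla f|$ grows at most linearly. With all four hypotheses in place, part~(b) of Theorem~\ref{holo} yields $\dim P(d)<\infty$ for every $d>0$, which is the assertion of the corollary.

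The single external input beyond Theorem~\ref{holo} on which the argument truly rests is the quadratic growth bound for the potential of an expanding soliton, which controls the term $2\lambda f$; everything else is formal manipulation of the soliton equations. I do not anticipate a genuine obstacle, since the corollary is phrased precisely so that its hypotheses feed directly into those of Theorem~\ref{holo}(b).
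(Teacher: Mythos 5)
Your proposal is correct and follows the same route as the paper: the paper likewise deduces the corollary immediately from Theorem \ref{holo}(b), merely asserting (without proof) the standard fact that $\left\vert \nabla f\right\vert$ grows at most linearly on an expanding soliton, which you verify via the Hamilton identity $S+\left\vert \nabla f\right\vert ^{2}-2\lambda f=\mathrm{const}$ together with $S\geq n\lambda$. (As a small simplification, the quadratic bound on $\left\vert f\right\vert$ you import as a Cao--Zhou analogue actually follows from your own displayed identity: it gives $\left\vert \nabla \sqrt{-f+c}\right\vert \leq \sqrt{\left\vert \lambda \right\vert /2}$, so no external estimate is needed.)
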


In the case of shrinking gradient Ricci solitons, stronger result is
available. Note that by scaling the metric, we may assume $\lambda=\frac{1}{2%
}.$ Now the potential function $f,$ after adding a suitable constant,
satisfies

\begin{equation*}
S+\left\vert \nabla f\right\vert ^{2}=f \text{ and \ \ }S+\Delta f=\frac{n}{2%
}.
\end{equation*}%
In addition, by Chen \cite{C},
\begin{equation*}
S\geq 0.
\end{equation*}
Furthermore, Cao and Zhou \cite{CZ} have shown that
\begin{equation*}
\frac{1}{4}\left( d\left( p,x\right) -c\left( n\right) \right) ^{2}\leq
f\left( x\right) \leq \frac{1}{4}\left( d\left( p,x\right) +c\left( n\right)
\right) ^{2}
\end{equation*}%
for any $x\in M$ and

\begin{equation*}
V_p(R)\le c(n)\,R^n
\end{equation*}
for $R\geq 1.$ Here, $p\in M$ is a minimum point for $f$ and the constant $%
c(n)$ depends only on the dimension $n$ of $M.$

In particular, one sees that $f$ is proper, $|\nabla f|$ grows at most
linearly and $M$ has polynomial volume growth of order $n.$ Applying part
(b) of Theorem \ref{holo}, one concludes that $P(d)$ is finite dimensional
for all $d>0.$ It turns out the dimension of the spaces $P(d)$ can be
estimated by a universal constant only depending on the growth order $d$ and
the dimension $n$ of the underlying manifold.

\begin{theorem}
\label{Shrinking}Let $\left( M^n,g,f\right) $ be a gradient K\"{a}hler Ricci
shrinking soliton of complex dimension $n$. Then $\dim P(d)\le C(n,d),$ a
constant depending only on $n$ and $d,$ for all $d>0.$
\end{theorem}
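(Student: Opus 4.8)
The idea is to get a *universal* dimension bound (depending only on $n$ and $d$) by combining the soft finite-dimensionality argument behind Theorem \ref{holo}(b) with the explicit geometric estimates available on shrinking solitons. Part (b) already tells us $\dim P(d)<\infty$; the point is to redo the counting argument carefully, tracking all constants, and observe they are controlled purely by $n$ and $d$ because of the Cao–Zhou estimates $\frac{1}{4}(d(p,x)-c(n))^2 \le f(x) \le \frac{1}{4}(d(p,x)+c(n))^2$, $V_p(R)\le c(n)R^n$, together with $S\ge 0$, $S+|\nabla f|^2=f$, and $S+\Delta f=\frac n2$. In particular $|\nabla f|^2 \le f$, so $|\nabla f|$ grows at most linearly with a *dimensionless* constant, and the geometry of $M$ at a given scale is quantitatively pinned down by $n$ alone.

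The mechanism is the standard one for bounding dimensions of spaces of functions with controlled growth: if $\dim P(d) > N(n,d)$ for a suitably chosen $N$, then by linear algebra one can find, inside the $N$-dimensional space of $L^2$-normalized (over $B_p(R_0)$) holomorphic functions of polynomial growth $\le d$, a nonzero element $u$ vanishing to high order at $p$. One then plays off two estimates against each other. First, a mean value / sub-mean-value inequality for $|u|^2$ (valid because $|u|^2$ is subharmonic, using the volume bound $V_p(R)\le c(n)R^n$ and a local Poincaré–Sobolev or Moser iteration with constants depending only on $n$) controls $\sup_{B_p(1)}|u|^2$ by the $L^2$ norm on a fixed ball. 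Second, because $u$ is holomorphic and vanishes to order $k$ at $p$, a three-circle / rapid-growth argument — exactly the kind of Hadamard three-circles property for holomorphic functions that underlies the polynomial growth estimates, and which on a shrinking soliton can be run against the weight $e^{-f}$ or against geodesic spheres using the linear growth of $|\nabla f|$ — forces $u$ to grow at least like $r^{\,ck}$ for some $c=c(n)>0$. Choosing $k$ so that $ck > d$ contradicts $u\in P(d)$. Since the whole chain of inequalities has constants depending only on $n$ (volume, Sobolev, three-circles rate) and on $d$ (the growth order), the threshold $N$ at which a contradiction appears is $C(n,d)$, which is the claimed bound.

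Concretely the steps I would carry out are: (1) record the soliton estimates above and derive from them that on $B_p(2^{j})$ one has $|\nabla f|\le C(n)2^{j}$ and $V_p(2^j)\le c(n)2^{jn}$, so all local geometric quantities at scale $2^j$ scale in a controlled way; (2) prove a three-circles type inequality: for a holomorphic $u$, $\log\big(\sup_{B_p(r)}|u|\big)$ is, up to additive and multiplicative constants depending only on $n$, a convex-like function of $\log r$ on the relevant range — this is where the Kähler structure and the soliton identities enter, e.g. via the weighted Laplacian $\Delta_f = \Delta - \langle\nabla f,\nabla\cdot\rangle$ for which $|u|$ is still a subsolution and for which the drift is controlled; (3) deduce that a holomorphic $u\in P(d)$ vanishing to order $k$ at $p$ must in fact have $\sup_{B_p(1)}|u|$ exponentially small in $k$ relative to $\sup_{B_p(R)}|u|$, hence $u\equiv 0$ once $k$ exceeds a bound depending on $n,d$; (4) combine with the elementary linear-algebra fact that if $\dim P(d) > \binom{n+k}{n}$-type count (the number of holomorphic jets of order $<k$ at $p$, which is dimensionally $\le C(n)k^{n}$) then some nonzero $u\in P(d)$ vanishes to order $k$ at $p$; choosing $k\sim c(n,d)$ from step (3) and reading off $C(n,d)$ from the jet count finishes the proof.

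The main obstacle I anticipate is step (2)–(3): establishing the three-circles/rapid-growth estimate with constants that genuinely depend only on $n$ and not on the unknown global geometry of $M$. On $\mathbb{C}^n$ this is Hadamard's theorem; on a general Kähler manifold one typically needs either a good comparison for the Laplacian on $|u|^2$ (hence Ricci lower bounds) or a clever use of the potential function. Here the leverage is precisely the soliton: the weighted volume $e^{-f}\,dV$ is finite with Gaussian-type decay, $f$ is uniformly equivalent to $\frac14 r^2$, and the Bakry–Émery Ricci tensor $\mathrm{Ric}+\mathrm{Hess}(f)=\frac12 g$ is bounded below by a positive constant — this uniform lower bound, independent of any curvature assumption on $M$ itself, is what should make the mean-value inequality, the Poincaré inequality, and thus the whole counting argument quantitative with constants $C(n,d)$. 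Making this Bakry–Émery comparison interact correctly with the holomorphicity of $u$ (so that one really gets the sharp polynomial, not merely exponential, control) is the delicate point.
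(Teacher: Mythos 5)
Your plan is the Colding--Minicozzi/Ni route: count jets at $p$, produce a nonzero $u\in P(d)$ vanishing to high order, and rule it out by a three-circles/rapid-growth estimate. This is genuinely different from the paper's argument, and the difference matters: the step you yourself flag as the ``delicate point'' --- a three-circles inequality for holomorphic functions with constants depending only on $n$ --- is a genuine gap, not a technicality. Such convexity of $\log\sup_{B_p(r)}|u|$ in $\log r$ is known under sign conditions on the holomorphic sectional or bisectional curvature (this is precisely the setting of Ni's theorem cited in the introduction), but a shrinking K\"ahler Ricci soliton carries no such curvature control. The lower bound $\mathrm{Ric}+\mathrm{Hess}(f)=\tfrac12 g$ on the Bakry--\'Emery tensor governs the drift Laplacian $\Delta_f$ and does give the volume bound, the Sobolev constant, and the mean value inequality (this part of your plan agrees with the paper, which quotes \cite{MW} for exactly this); but $|u|^2$ for holomorphic $u$ is subharmonic for $\Delta$, and the drift term $\langle\nabla f,\nabla|u|^2\rangle$ has no sign, so the Bakry--\'Emery bound does not feed into a frequency/three-circles monotonicity for $|u|$. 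Likewise, converting ``vanishing to order $k$ at $p$'' into quantitative smallness of $\sup_{B_p(\rho)}|u|$ requires local geometric control near $p$ (holomorphic coordinates with bounded distortion) that is not available in terms of $n$ alone. As written, the proposal reduces the theorem to an unproved estimate that is at least as hard as the theorem itself.

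The paper's mechanism avoids this entirely and is worth contrasting. It exploits $f_{\alpha\beta}=0$ to generate a sequence of holomorphic functions $u_{k+1}=\langle\nabla u_k,\nabla f\rangle$; Lemma \ref{l3} bounds $\int_{B_p(r)}|u_k|^2\le C\mu^k$ with $\mu=c(n,d)$ using only the Cao--Zhou volume and potential estimates; Lemma \ref{l2} converts this into the level-set inequality $\int_{\partial D(s)}|u_1|^2|\nabla f|^{-1}\le\mu\int_{\partial D(s)}|u_0|^2|\nabla f|^{-1}$; and then a weighted integration by parts against $e^{-f}$, using the soliton identity $\Delta_f f=\tfrac n2-f$, yields the doubling-type inequality $\int_{D(5t_0)}|u_0|^2\le K(n,d)\int_{D(t_0)}|u_0|^2$ with $t_0=t_0(n,d)$. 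That doubling inequality plays the role your three-circles estimate was meant to play, and it is obtained without any vanishing-order or convexity considerations; the final dimension count is then the same P. Li lemma plus Moser iteration that you invoke. If you want to complete your own route, you would need to first establish the three-circles property on shrinking solitons --- which, to my knowledge, is open without curvature hypotheses --- or else replace step (2)--(3) by a doubling estimate of the paper's type.
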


It is possible to obtain the constant $C(n,d)$ explicitly, as a polynomial of $d$. 
Again, it has been shown in \cite {MS} that a holomorphic function 
with finite Dirichlet energy on a gradient K\"{a}hler shrinking Ricci soliton is
necessarily a constant. 
For Theorem \ref{Shrinking} , an important example to keep in mind is the Gaussian
shrinking soliton given by $M=\mathbb{C}^{n}$ endowed with the Euclidean
metric and $f\left( z\right) =\frac{1}{4}\left\vert z\right\vert ^{2}.$
Clearly, there exist holomorphic functions of polynomial growth. It will be
interesting to see if the dimension of the space $P(d)$ is actually
maximized over the Gaussian shrinking soliton among all the gradient K\"{a}%
hler Ricci shrinking solitons. This will be an analogue to the
aforementioned result of Ni \cite{N} concerning K\"{a}hler manifolds with
nonnegative bisectional curvature.

Our technique here does not seem to allow us to address the issue of whether
the ring of all polynomial (or exponential) growth holomorphic functions is
finitely generated. In view of Theorem \ref{holo}, one can speculate that
this is the case for the ring of all exponential growth holomorphic
functions and for the ring of all polynomial growth holomorphic functions
under the assumptions of part (a) and (b), respectively.

Finally, we also have a Liouville type result for holomorphic forms in the
similar spirit of Theorem \ref{holo}.

\begin{theorem}
\label{form}Let $M^n$ be a complete K\"{a}hler manifold. Assume that there
exists a proper function $f$ on $M$ such that $f_{\alpha \beta }=0$ with
respect to unitary frames and $|\nabla f|$ is bounded. Then for all $0\leq
p\leq n,$ $\dim F(p)<\infty,$ where $F(p)$ denotes the space of holomorphic $%
(p,0)$ forms $\omega$ on $M$ with $\int_M |\omega|^2<\infty.$
\end{theorem}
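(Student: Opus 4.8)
The plan is to adapt the argument behind Theorem~\ref{holo}(a) to the setting of holomorphic $(p,0)$-forms, replacing ``growth at most $d$'' by ``square-integrability.'' Since $f$ is proper with $|\nabla f|$ bounded by some constant $a$, the sublevel sets $\{f\le t\}$ exhaust $M$, and the coarea formula shows that the volume of $\{f\le t\}$ grows at most like $e^{Ct}$; moreover, because $f$ is proper with bounded gradient, $r(x)$ and $f(x)$ are comparable up to affine terms, so $M$ automatically has exponential volume growth. The key structural input, exactly as in Theorem~\ref{holo}, is that $f_{\alpha\beta}=0$ forces the $(1,0)$-vector field $X=\nabla^{1,0}f$ (equivalently $\partial f$ raised) to be holomorphic, so its (local) flow acts by biholomorphisms, and contraction/Lie derivative with $X$ interacts well with $\bar\partial$ and with the Chern connection on $\Omega^{p,0}$. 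Concretely, for a holomorphic $(p,0)$-form $\omega$, the quantity $|\omega|^2 e^{-2df}$-type weights or, better, the Bochner-type identity $\Delta |\omega|^2 = |\nabla\omega|^2 + \langle \text{(curvature terms)}\rangle$ combined with the Kähler identities will let us run the same mean-value / integration-by-parts scheme.

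The concrete steps I would carry out are: (1) Establish that $\dim F(p)<\infty$ is equivalent to a uniform bound on the number of linearly independent elements by producing, for a suitable large radius $R_0=R_0(n)$, a linear injection $F(p)\hookrightarrow$ (jets of order $N$ at $p$) for some finite $N$; this reduces everything to a ``quantitative unique continuation'' statement: if $\omega\in F(p)$ vanishes to high enough order at $p$, then $\omega\equiv 0$ with a controlled norm. (2) Prove the needed gradient/decay estimate: using $\bar\partial\omega=0$ and the Weitzenböck formula on $(p,0)$-forms, together with the Kähler identity $\Delta_{\bar\partial}=\tfrac12\Delta$, show $\Delta|\omega|^2\ge 0$ weakly (subharmonicity), or more precisely $\Delta|\omega|^2\ge \frac{|\nabla|\omega|^2|^2}{C|\omega|^2}$ after discarding a sign-favorable curvature term, exactly mirroring the holomorphic-function case where $|u|^2$ is subharmonic. (3) Use the drift Laplacian $\Delta_f=\Delta-\langle\nabla f,\nabla\cdot\rangle$: because $f_{\alpha\beta}=0$, one checks that $|\omega|^2$ satisfies a good differential inequality with respect to $\Delta_f$, and the integration-by-parts identity $\int_M (\Delta_f h)\, e^{-f}=0$ (for compactly supported $h$, or with cutoffs controlled by $|\nabla f|\le a$) yields a reverse Poincaré / Caccioppoli inequality $\int_{\{f\le t\}}|\nabla\omega|^2 e^{-f}\le C\int_{\{f\le t+1\}}|\omega|^2 e^{-f}$. (4) Iterate this with the growth bound on the weighted volume to get an a priori $L^2$-to-$L^\infty$ (or $L^2$-to-jet) estimate on a fixed ball, uniform over $\omega\in F(p)$ normalized by $\int_M|\omega|^2=1$; then a standard linear-algebra dimension-counting argument (à la Li, Colding--Minicozzi) bounds $\dim F(p)$ by the dimension of a space of jets, hence finitely.

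The main obstacle I anticipate is step (2)--(3): on $(p,0)$-forms the Weitzenböck/Bochner formula carries a curvature term, and the whole point of the theorem is to avoid curvature assumptions, so one must show that the contribution of curvature is absorbed or cancelled precisely because of the soliton-type condition $f_{\alpha\beta}=0$. The mechanism should be that $L_X$ (Lie derivative along the holomorphic field $\nabla^{1,0}f$) acting on $\omega$ produces another holomorphic $(p,0)$-form, and the commutator of $L_X$ with the Hodge Laplacian is controlled entirely by $\mathrm{Hess}(f)$ and its $(1,1)$-part, so the ``bad'' curvature terms only ever appear paired with $f_{\alpha\bar\beta}$-type expressions that are handled by the drift, never with the genuine curvature of $M$. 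Making this cancellation rigorous --- i.e.\ verifying that the identity $\int_M \langle (\Delta_{\bar\partial}-\tfrac12\Delta_f)\omega,\omega\rangle e^{-f}$ has no uncontrolled curvature remainder --- is the crux; once it is in place, the finite-dimensionality follows from the by-now-standard machinery used for Theorems~\ref{holo} and~\ref{Shrinking}. I would also need to check the cutoff estimates near infinity are legitimate, which is where properness of $f$ (guaranteeing the sublevel sets are compact) and boundedness of $|\nabla f|$ (controlling the cutoff gradients) are used.
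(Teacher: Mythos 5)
Your proposal correctly identifies its own sticking point but does not resolve it, and the paper's resolution is quite different from the route you sketch. You plan to run the Weitzenb\"ock/Bochner machinery on $(p,0)$-forms and hope that the curvature term is ``absorbed or cancelled'' by the condition $f_{\alpha\beta}=0$; you explicitly flag making this cancellation rigorous as ``the crux,'' and nothing in the proposal actually establishes it. There is no reason such a global cancellation should hold --- the hypotheses impose no curvature control whatsoever --- and the paper never attempts it. Instead the paper inducts on the degree $p$: for a holomorphic $(q,0)$-form $\omega$, the contraction $\theta^\omega=\omega(\cdot,\dots,\cdot,\nabla f)$ is again holomorphic (precisely because $f_{\alpha\beta}=0$) and is $L^2$ (because $|\nabla f|$ is bounded), so by the induction hypothesis the image of $\omega\mapsto\theta^\omega$ lies in a finite-dimensional space and one only has to bound the kernel $F=\{\omega:\theta^\omega=0\}$. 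On that kernel, closedness of $\omega$ together with $\omega_{i_1\dots\alpha\dots i_q}f_{\bar\alpha}=0$ gives the pointwise bound $|\langle\nabla|\omega|^2,\nabla f\rangle|\le C|\omega|^2$ with $C$ depending only on $\mathrm{Hess}(f)$ on a fixed compact sublevel set; integration by parts over $D(r)$ then yields $\int_{D(r_0)}|\omega|^2\le C\int_{D(r_0-\varepsilon)}|\omega|^2$, and Li's lemma together with a local mean value inequality (where curvature enters only through its bound on the compact set $D(r_0)$, hence harmlessly) finishes the dimension count. This degree-lowering contraction, which converts your infinite sequence of estimates into a finite induction on $p$, is the missing idea in your write-up.

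Two smaller problems: your claim that properness of $f$ with bounded gradient ``automatically'' gives exponential volume growth is false (the inequality $f(x)\le f(p)+a\,r(x)$ controls $f$ by $r$, not the volume of sublevel sets), though it is also unnecessary here since the $L^2$ hypothesis replaces any growth condition; and the natural analogue of the iteration $u_{k+1}=\langle\nabla u_k,\nabla f\rangle$ for forms is exactly the contraction above, which lowers the degree rather than preserving it --- your scheme of iterating a fixed-degree estimate does not get off the ground for $p\ge 1$.
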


\section{Proof of Theorem \protect\ref{holo}}

In this section, we give proof to Theorem \ref{holo}. Throughout, we assume $%
M$ is a complete K\"ahler manifold and $f$ a proper function on $M$ such
that $f_{\alpha \beta}=0$ with respect to unitary frames on $M.$ Without
loss of generality, we may assume $f$ is positive with minimum value $c_0.$
Let us denote
\begin{equation*}
D\left( t\right) :=\left\{ x\in M:f\left( x\right) \leq t\right\} .
\end{equation*}%
We assume everywhere that $c_0< t <\sup_M f$ so that $D\left( t\right) $ is
nonempty. Notice that $D\left( t\right) $ is compact for any such $t$ as $f$
is proper.

Let $u$ be a holomorphic function which is not identically zero on $M$. We
define a sequence of functions $\left( u_{k}\right) _{k\geq 0}$ as follows.
We set $u_{0}:=u$ and define inductively%
\begin{equation}
u_{k+1}:=\left\langle \nabla u_{k},\nabla f\right\rangle =\left(
u_{k}\right) _{\alpha }f_{\overline{\alpha }},\ \ \ \ \text{for }k\geq 0.
\label{a3}
\end{equation}

\begin{lemma}
\label{l1} $u_{k}$ is holomorphic for any $k\geq 0$.
\end{lemma}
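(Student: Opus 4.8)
The plan is to prove this by induction on $k$, the base case $u_0 = u$ being holomorphic by hypothesis. For the inductive step, assume $u_k$ is holomorphic and show $u_{k+1} = (u_k)_\alpha f_{\bar\alpha}$ is holomorphic; that is, I need to check that $(u_{k+1})_{\bar\beta} = 0$ for all $\beta$. The natural approach is a direct computation using covariant differentiation with respect to unitary frames, exploiting the two structural facts available: first, $u_k$ is holomorphic, so all its $\bar\partial$-derivatives vanish, and in particular $(u_k)_{\alpha\bar\beta} = 0$ (holomorphic functions are "parallel in the antiholomorphic directions" in the appropriate sense); second, the hypothesis $f_{\alpha\beta} = 0$ on the potential, which says $\nabla f$ is the real part of a holomorphic vector field.

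The key computation is to differentiate $u_{k+1} = (u_k)_\alpha f_{\bar\alpha}$ in the $\bar\beta$ direction:
\begin{equation*}
(u_{k+1})_{\bar\beta} = (u_k)_{\alpha\bar\beta}\, f_{\bar\alpha} + (u_k)_\alpha\, f_{\bar\alpha\bar\beta}.
\end{equation*}
For the first term, since $u_k$ is holomorphic, $(u_k)_{\alpha\bar\beta} = 0$ — this is the statement that the $(1,1)$-part of the Hessian of a holomorphic function vanishes, which follows from $(u_k)_{\bar\gamma} = 0$ together with the commutation of covariant derivatives (the curvature terms that would appear when commuting only involve further $\bar\partial$-derivatives of $u_k$, which vanish). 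For the second term, I use that $f$ is real-valued, so $f_{\bar\alpha\bar\beta} = \overline{f_{\alpha\beta}} = \overline{0} = 0$ by the hypothesis $f_{\alpha\beta} = 0$. Hence $(u_{k+1})_{\bar\beta} = 0$, completing the induction.

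The main obstacle — really the only subtle point — is justifying $(u_k)_{\alpha\bar\beta} = 0$ cleanly, i.e., that a holomorphic function has vanishing mixed complex Hessian with respect to the Levi-Civita (Chern) connection. One should be careful about the order of covariant derivatives and which Ricci/curvature identities are invoked; on a Kähler manifold, the cleanest route is to note that in holomorphic normal coordinates, or simply from the Kähler condition, $\nabla_{\bar\beta}(u_k)_\alpha = \partial_{\bar\beta}\partial_\alpha u_k - \Gamma^\gamma_{\alpha\bar\beta}(u_k)_\gamma$ and both terms vanish: the first because $u_k$ is holomorphic in the ordinary sense, the second because the only nonzero Christoffel symbols of a Kähler metric are of type $\Gamma^\gamma_{\alpha\beta}$ (mixed-type symbols vanish). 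I would state this as a short remark and then the two-line computation above closes the argument; no volume-growth or properness hypotheses are needed here, only the Kähler structure and $f_{\alpha\beta} = 0$.
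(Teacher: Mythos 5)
Your proposal is correct and follows essentially the same route as the paper: induction on $k$, the product rule $(u_{k+1})_{\bar\beta}=(u_k)_{\alpha\bar\beta}f_{\bar\alpha}+(u_k)_\alpha f_{\bar\alpha\bar\beta}$, with the first term vanishing since $u_k$ is holomorphic and the second since $f$ is real and $f_{\alpha\beta}=0$. Your extra remark justifying $(u_k)_{\alpha\bar\beta}=0$ via the vanishing of mixed-type Christoffel symbols on a K\"ahler manifold is a correct elaboration of a step the paper takes for granted.
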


\begin{proof}[Proof of Lemma \protect\ref{l1}]
We show this by induction on $k.$ For $k=0$ this is obviously true. Assuming
it is true for $k\geq 0,$ we prove that $u_{k+1}$ is holomorphic. Indeed,
with respect to unitary frames, one has
\begin{equation*}
\left( u_{k+1}\right) _{\overline{\delta }}=\left( \left( u_{k}\right)
_{\alpha }f_{\overline{\alpha }}\right) _{\overline{\delta }}=\left(
u_{k}\right) _{\alpha \overline{\delta }}f_{\overline{\alpha }}+\left(
u_{k}\right) _{\alpha }f_{\overline{\alpha }\overline{\delta }}=0,
\end{equation*}%
where the last equality holds true because $u_{k}$ is holomorphic by the
induction hypothesis and function $f$ satisfies $f_{\alpha \beta }=0.$
\end{proof}

\begin{lemma}
\label{l2} Let $u$ be a nonzero holomorphic function on $M.$ If

\begin{equation*}
\int_{D\left( t\right) }\left\vert u_{k}\right\vert ^{2}\leq c\,\mu ^{k}
\end{equation*}%
for all $k\geq 0,$ where $c$ and $\mu $ are constants independent of $k,$
then for any regular value $r$ of $f$ with $r\leq t$ we have
\begin{equation}
\int_{\partial D\left( r\right) }\left\vert u_{1}\right\vert ^{2}\left\vert
\nabla f\right\vert ^{-1}\leq \mu \int_{\partial D\left( r\right)
}\left\vert u_{0}\right\vert ^{2}\left\vert \nabla f\right\vert ^{-1}.
\label{ineq}
\end{equation}
\end{lemma}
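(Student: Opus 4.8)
The plan is to deduce \eqref{ineq} from a \emph{log-convexity in the index $k$} of the boundary integrals
\[
a_k(r):=\int_{\partial D(r)}|u_k|^2\,|\nabla f|^{-1},
\]
namely $a_k(r)^2\le a_{k-1}(r)\,a_{k+1}(r)$ at every regular value $r$, combined with the coarea formula. I will work at a regular value $r$ of $f$, so that $\partial D(r)=f^{-1}(r)$ is a smooth compact hypersurface with outward unit normal $\nu=\nabla f/|\nabla f|$ and $D(r)$ is a compact manifold with smooth boundary (compactness coming from properness of $f$). Two simple facts will be used: (i) since $u_{k+1}=\langle\nabla u_k,\nabla f\rangle$, on $\partial D(r)$ one has $\partial_\nu u_k=|\nabla f|^{-1}u_{k+1}$, and therefore $\partial_\nu\bar u_k=|\nabla f|^{-1}\overline{u_{k+1}}$; and (ii) each $u_k$, being holomorphic on a K\"ahler manifold, is harmonic, hence so is $\bar u_k$.

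The key step is one integration by parts. Applying Green's second identity on $D(r)$ to the harmonic functions $u_{k-1}$ and $\bar u_k$ gives $\int_{\partial D(r)}u_{k-1}\,\partial_\nu\bar u_k=\int_{\partial D(r)}\bar u_k\,\partial_\nu u_{k-1}$, and substituting (i) into both sides yields, for all $k\ge1$,
\[
\int_{\partial D(r)}u_{k-1}\,\overline{u_{k+1}}\,|\nabla f|^{-1}=\int_{\partial D(r)}|u_k|^2\,|\nabla f|^{-1}=a_k(r).
\]
In particular this (a priori complex) integral equals the real number $a_k(r)$, so the Cauchy--Schwarz inequality with respect to the measure $|\nabla f|^{-1}\,dA$ on $\partial D(r)$ gives at once $a_k(r)^2\le a_{k-1}(r)\,a_{k+1}(r)$.

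To finish, fix a regular value $r\le t$ and suppose, for contradiction, that $a_1(r)>\mu\,a_0(r)$. Because $u\not\equiv0$ is holomorphic, its zero set is a complex-analytic subset of real dimension at most $2n-2$ and so cannot contain the real hypersurface $\partial D(r)$; hence $a_0(r)>0$. Log-convexity then forces all $a_k(r)>0$ and the ratios $a_{k+1}(r)/a_k(r)$ to be nondecreasing in $k$, whence $a_k(r)\ge a_0(r)\,\big(a_1(r)/a_0(r)\big)^k$. Since the level sets vary smoothly near a regular value, $a_0$ and $a_1$ are continuous there, so I may pick $\delta>0$, $\mu'>\mu$ and $\varepsilon_0>0$ with $[r-\delta,r]\subset(c_0,t]$ consisting of regular values along which $a_1>\mu'a_0$ and $a_0\ge\varepsilon_0$; the same argument gives $a_k(s)\ge\varepsilon_0\,(\mu')^k$ for all $s\in[r-\delta,r]$. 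By the coarea formula,
\[
\int_{D(t)}|u_k|^2=\int_{c_0}^t a_k(s)\,ds\ge\int_{r-\delta}^{r}a_k(s)\,ds\ge\delta\,\varepsilon_0\,(\mu')^k,
\]
which grows strictly faster than $c\,\mu^k$ and contradicts the hypothesis once $k$ is large. Hence $a_1(r)\le\mu\,a_0(r)$ at every regular value $r\le t$, which is \eqref{ineq}. The only delicate point is the displayed identity: it is exactly to make Green's identity legitimate---smooth $\partial D(r)$, nonvanishing $|\nabla f|$ on it, compact $D(r)$---that the statement restricts to regular values, and one must use harmonicity of both $u_{k-1}$ and $\bar u_k$; the remainder is a routine convexity-plus-coarea argument.
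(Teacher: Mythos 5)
Your proposal is correct and follows essentially the same route as the paper: the same Green's identity between consecutive $u_k$'s, the same Cauchy--Schwarz log-convexity $a_k^2\le a_{k-1}a_{k+1}$, and the same coarea-formula comparison with the volume-integral hypothesis to force $a_1\le\mu a_0$ at regular values. Your contrapositive formulation with an auxiliary $\mu'>\mu$ and the explicit continuity of $a_0,a_1$ near a regular value is just a slightly more careful rendering of the paper's final step of integrating $(\rho(s)/\mu)^k$ over $[r-\delta,r]$ and letting $k\to\infty$.
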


\begin{proof}[Proof of Lemma \protect\ref{l2}]
For a regular value $s$ of $f,$ let us denote%
\begin{equation}
\rho \left( s\right) :=\frac{\int_{\partial D\left( s\right) }\left\vert
u_{1}\right\vert ^{2}\left\vert \nabla f\right\vert ^{-1}}{\int_{\partial
D\left( s\right) }\left\vert u_{0}\right\vert ^{2}\left\vert \nabla
f\right\vert ^{-1}}\geq 0.  \label{a4}
\end{equation}%
Notice that $\rho \left( s\right) <\infty.$ Otherwise, by the unique
continuation property of holomorphic functions, it implies $u_{0}=0$ on $M.$

We now apply the following well known formula
\begin{equation}
\int_{D\left( s\right) }\left( w\Delta v-v\Delta w\right) =\int_{\partial
D\left( s\right) }\left( w\frac{\partial v}{\partial \nu }-v\frac{\partial w%
}{\partial \nu }\right)  \label{a5}
\end{equation}%
to
\begin{equation*}
w:=u_{k}\text{ \ \ and \ }v:=\overline{u_{k+1}},
\end{equation*}%
where $\frac{\partial }{\partial \nu }=\frac{\nabla f}{\left\vert \nabla
f\right\vert }$ is the unit normal vector to $\partial D\left( s\right) .$

Observe that
\begin{eqnarray*}
\frac{\partial u_{k}}{\partial \nu } &=&\frac{1}{\left\vert \nabla
f\right\vert }\left\langle \nabla u_{k},\nabla f\right\rangle =\frac{1}{%
\left\vert \nabla f\right\vert }u_{k+1},\text{ \ and} \\
\frac{\partial \overline{u_{k+1}}}{\partial \nu } &=&\frac{1}{\left\vert
\nabla f\right\vert }\left\langle \nabla \overline{u_{k+1}},\nabla
f\right\rangle =\frac{1}{\left\vert \nabla f\right\vert }\overline{u_{k+2}}.
\end{eqnarray*}%
Also, notice that both $u_{k}$ and $\overline{u_{k+1}}$ are harmonic as both
are holomorphic. We deduce from (\ref{a5}) that%
\begin{equation}
\int_{\partial D\left( s\right) }\left( u_{k}\overline{u_{k+2}}-\left\vert
u_{k+1}\right\vert ^{2}\right) \left\vert \nabla f\right\vert ^{-1}=0.
\label{a6}
\end{equation}%
By the Cauchy-Schwarz inequality, we obtain

\begin{gather*}
\left( \int_{\partial D\left( s\right) }\left\vert u_{k+1}\right\vert
^{2}\left\vert \nabla f\right\vert ^{-1}\right) ^{2}\leq \left(
\int_{\partial D\left( s\right) }\left\vert u_{k}\right\vert \left\vert
u_{k+2}\right\vert \left\vert \nabla f\right\vert ^{-1}\right) ^{2} \\
\leq \left( \int_{\partial D\left( s\right) }\left\vert u_{k}\right\vert
^{2}\left\vert \nabla f\right\vert ^{-1}\right) \left( \int_{\partial
D\left( s\right) }\left\vert u_{k+2}\right\vert ^{2}\left\vert \nabla
f\right\vert ^{-1}\right) .
\end{gather*}%
We have thus proved that
\begin{gather}
\left( \int_{\partial D\left( s\right) }\left\vert u_{k+1}\right\vert
^{2}\left\vert \nabla f\right\vert ^{-1}\right) ^{2}\leq \left(
\int_{\partial D\left( s\right) }\left\vert u_{k}\right\vert ^{2}\left\vert
\nabla f\right\vert ^{-1}\right)  \label{a7} \\
\times \left( \int_{\partial D\left( s\right) }\left\vert u_{k+2}\right\vert
^{2}\left\vert \nabla f\right\vert ^{-1}\right)  \notag
\end{gather}%
for any $k\geq 0.$

Multiplying the inequality (\ref{a7}) from $k=0$ to $l,$ we conclude

\begin{eqnarray*}
&&\left( \int_{\partial D\left( s\right) }\left\vert u_{1}\right\vert
^{2}\left\vert \nabla f\right\vert ^{-1}\right) \left( \int_{\partial
D\left( s\right) }\left\vert u_{l}\right\vert ^{2}\left\vert \nabla
f\right\vert ^{-1}\right) \\
&\leq &\left( \int_{\partial D\left( s\right) }\left\vert u_{0}\right\vert
^{2}\left\vert \nabla f\right\vert ^{-1}\right) \left( \int_{\partial
D\left( s\right) }\left\vert u_{l+1}\right\vert ^{2}\left\vert \nabla
f\right\vert ^{-1}\right) .
\end{eqnarray*}%
In view of the definition of $\rho (s) $ in (\ref{a4}), this means that
\begin{equation}
\int_{\partial D\left( s\right) }\left\vert u_{l+1}\right\vert
^{2}\left\vert \nabla f\right\vert ^{-1}\geq \rho \left( s\right)
\int_{\partial D\left( s\right) }\left\vert u_{l}\right\vert ^{2}\left\vert
\nabla f\right\vert ^{-1}.  \label{a8}
\end{equation}%
Iterating (\ref{a8}) from $l=0$ to $k-1,$ we arrive at
\begin{equation*}
\int_{\partial D\left( s\right) }\left\vert u_{k}\right\vert ^{2}\left\vert
\nabla f\right\vert ^{-1}\geq \rho ^{k} \left( s\right) \,\int_{\partial
D\left( s\right) }\left\vert u_{0}\right\vert ^{2}\left\vert \nabla
f\right\vert ^{-1} \text{ \ \ for all }k\geq 0.
\end{equation*}

We now integrate the preceding inequality from $r-\delta $ to $r,$ where $%
\delta >0$ is small so that any $s\in \left[ r-\delta ,r\right] $ is a
regular value for $f $. The co-area formula implies that
\begin{eqnarray*}
&&\int_{D\left( r\right) \backslash D\left( r-\delta \right) }\left\vert
u_{k}\right\vert ^{2} \\
&\geq &\int_{r-\delta }^{r}\rho ^{k}\left( s\right) \left( \int_{\partial
D\left( s\right) }\left\vert u_{0}\right\vert ^{2}\left\vert \nabla
f\right\vert ^{-1}\right) ds \\
&\geq &\left( \inf_{r-\delta \leq s\leq r}\int_{\partial D\left( s\right)
}\left\vert u_{0}\right\vert ^{2}\left\vert \nabla f\right\vert ^{-1}\right)
\int_{r-\delta }^{r}\rho ^{k}\left( s\right) ds.
\end{eqnarray*}%
Clearly,
\begin{equation*}
\inf_{r-\delta \leq s\leq r}\int_{\partial D\left( s\right) }\left\vert
u_{0}\right\vert ^{2}\left\vert \nabla f\right\vert ^{-1}>0
\end{equation*}%
as otherwise, it would mean $u_{0}=0$ on a level set of $f,$ which in turn
implies $u_{0}=0$ on $M.$

Thus, we have proved that

\begin{eqnarray*}
\int_{r-\delta }^{r}\rho ^{k}\left( s\right) ds &\leq &C_{1}\,\int_{D\left(
r\right) \backslash D\left( r-\delta \right) }\left\vert u_{k}\right\vert
^{2} \\
&\leq &C_{1}\,\int_{D\left( t\right) }\left\vert u_{k}\right\vert ^{2} \\
&\leq &C_{1}\,c\,\mu ^{k},
\end{eqnarray*}%
where $C_{1}$ is independent of $k$ and $c$ is the constant in the
hypothesis.

Rewriting the inequality into
\begin{equation*}
\int_{r-\delta }^{r}\left( \frac{\rho \left( s\right) }{\mu }\right)
^{k}ds\leq C_{2}
\end{equation*}%
and letting $k\rightarrow \infty ,$ one concludes that
\begin{equation*}
\rho \left( r\right) \leq \mu .
\end{equation*}%
Hence,
\begin{equation}
\int_{\partial D\left( r\right) }\left\vert u_{1}\right\vert ^{2}\left\vert
\nabla f\right\vert ^{-1}\leq \mu \,\int_{\partial D\left( r\right)
}\left\vert u_{0}\right\vert ^{2}\left\vert \nabla f\right\vert ^{-1}.
\label{a12}
\end{equation}%
This proves the lemma.
\end{proof}

In view of Lemma \ref{l2}, it is important to control the growth rate of $%
\left\vert u_{k}\right\vert .$ This is done in the following lemma.

\begin{lemma}
\label{l3}

(a) If $|\nabla f|\leq b$ and $V_p(R)\le C\,e^{a\,R}$ for all $R\ge 1,$
where $V_p(R)$ denotes the volume of the geodesic ball $B_p(R)$ in $M,$ then
for any holomorphic function $u$ satisfying

\begin{equation*}
|u|(x)\leq c\,e^{d\,r(x)},
\end{equation*}%
there exists a constant $C_{0}$ independent of $k$ such that

\begin{equation*}
\int_{B_{p}\left( r\right) }\left\vert u_{k}\right\vert ^{2}\leq
C_{0}\,\left( e^{(2d+a)r}\right) ^{k}
\end{equation*}%
for any $r\geq 2b.$

(b) If $|\nabla f|(x)\le b\,\left(r(x)+1\right)$ and $V_p(R)\le C\,R^m$ for
all $R\ge 1,$ then for any holomorphic function $u$ satisfying

\begin{equation*}
|u|(x)\le c\,(r(x)+1)^d,
\end{equation*}
we have

\begin{equation*}
\int_{B_{p}\left( r\right) }\left\vert u_{k}\right\vert ^{2}\leq
C\,_{0}\left( (b+1)^{2}\,2^{m+2d+2}\right) ^{k}
\end{equation*}%
for a constant $C_{0}>0$ independent of $k$ and any $r\geq b.$
\end{lemma}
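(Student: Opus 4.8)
The plan is to prove both parts simultaneously by induction on $k$, using the elementary observation that $u_{k+1} = \langle \nabla u_k, \nabla f\rangle$ is holomorphic (Lemma \ref{l1}) together with the interior gradient estimate for holomorphic functions. The base case $k=0$ is just the growth hypothesis on $u$ combined with the volume bound: $\int_{B_p(r)}|u_0|^2 \le c^2 e^{2dr}V_p(r) \le C_0 e^{(2d+a)r}$ in case (a), and $\int_{B_p(r)}|u_0|^2 \le c^2(r+1)^{2d}V_p(r) \le C_0 r^{m+2d}$ in case (b), after enlarging $C_0$. The key point is a mean value / gradient inequality: since holomorphic functions are harmonic and $\Delta |u_k|^2 \ge 2|\nabla u_k|^2 \ge 0$, subharmonicity gives $|u_k|^2(x)$ bounded by its average over a small ball, and more importantly the Cauchy-type interior estimate $|\nabla u_k|^2(x) \le \frac{A}{V_x(\sigma)}\int_{B_x(\sigma)}|u_k|^2$ for $\sigma$ controlled by the local geometry; I would instead use the cleaner route $\int_{B_p(r)}|\nabla u_k|^2 \le \frac{1}{2}\int_{B_p(r+1)}\Delta|u_k|^2$, and control the boundary term via a cutoff to reduce everything to $\int_{B_p(r+1)}|u_k|^2$.

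The inductive step then runs as follows. Write $|u_{k+1}| = |\langle \nabla u_k,\nabla f\rangle| \le |\nabla f|\,|\nabla u_k|$. In case (a), on $B_p(r)$ we have $|\nabla f|\le b$, so $\int_{B_p(r)}|u_{k+1}|^2 \le b^2\int_{B_p(r)}|\nabla u_k|^2$. Using the Bochner/integration-by-parts estimate with a cutoff $\phi$ supported in $B_p(r')$ and equal to $1$ on $B_p(r)$ with $|\nabla\phi|\le C$, one gets $\int_{B_p(r)}|\nabla u_k|^2 \le C'\int_{B_p(r')}|u_k|^2$ where $r' = r+1$ (or a fixed increment depending on the chosen $\sigma$); since the target exponent $(e^{(2d+a)r})^k$ is increasing in $r$, replacing $r$ by $r+1$ only costs a factor $(e^{2d+a})^k$-ish, which is exactly why the geometric constant in the statement appears. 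Absorbing $b^2$, $C'$, and the radius shift, and using $r\ge 2b$ to guarantee the cutoff region stays in a controlled range, one closes the induction with the stated constant $e^{(2d+a)r}$ per step. Case (b) is identical except $|\nabla f|\le b(r+1)$ on $B_p(r)$ contributes $(b+1)^2 r^2$, and the radius-doubling trick $B_p(r)\subset B_p(2r)$ converts the polynomial factors $r^{m}$, $r^{2d}$, $r^2$ into the clean constant $2^{m+2d+2}$ per step; the hypothesis $r\ge b$ plays the same role of keeping the cutoff geometry uniform.

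The main obstacle — and the only place real care is needed — is making the interior estimate $\int_{B_p(r)}|\nabla u_k|^2 \lesssim \int_{B_p(r+c)}|u_k|^2$ with a constant that does \emph{not} degenerate as $k\to\infty$ and does not secretly depend on $k$ through the geometry of the (possibly wild) manifold $M$. This is handled by noting that the constant in the Caccioppoli-type inequality $\int \phi^2|\nabla u_k|^2 \le 4\int|\nabla\phi|^2|u_k|^2$ (which follows from $\mathrm{div}(\phi^2 u_k\nabla u_k)$ being integrated, using $\Delta u_k=0$) is universal — it is literally the number $4$ — so the only manifold-dependent quantity is the chosen cutoff's Lipschitz constant over a \emph{fixed} annular width, which is absorbed into $C_0$ once and for all. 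Thus the per-step multiplicative constant is genuinely the advertised $e^{(2d+a)r}$ (resp. $(b+1)^2 2^{m+2d+2}$), and iterating $k$ times produces its $k$-th power, completing the proof.
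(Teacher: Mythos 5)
Your overall strategy---a Caccioppoli inequality for the harmonic functions $u_k$ combined with the pointwise bound $|u_{k+1}|\le|\nabla f|\,|\nabla u_k|$, then feeding in the volume and growth hypotheses---is exactly the paper's, and your treatment of part (b) (doubling the radius at each step so that the polynomial factors collapse to a fixed power of $2$) matches the paper's argument. The gap is in the bookkeeping for part (a). You propose to close an induction on $k$ for the statement $\int_{B_p(r)}|u_k|^2\le C_0\bigl(e^{(2d+a)r}\bigr)^k$ using a \emph{fixed} radius increment $r\mapsto r+1$. But the inductive hypothesis at radius $r+1$ reads $C_0\bigl(e^{(2d+a)(r+1)}\bigr)^{k}=C_0\,e^{(2d+a)k}\bigl(e^{(2d+a)r}\bigr)^{k}$, so the step from $k$ to $k+1$ costs an extra factor $e^{(2d+a)k}$ that grows with $k$ and cannot be absorbed into a per-step constant; accumulating such factors over $k$ steps gives $e^{(2d+a)k(k-1)/2}$, which is not geometric. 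Your sentence that replacing $r$ by $r+1$ ``only costs a factor $(e^{2d+a})^k$-ish, which is exactly why the geometric constant in the statement appears'' conflates this $k$-dependent factor with the $r$-dependent constant $e^{(2d+a)r}$; they are not the same, and the induction as framed does not close.

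The repair is to not induct on the final inequality but to first iterate the one-step estimate $\int_{B_p(t)}|u_{k+1}|^2\le\frac{4b^2}{\rho^2}\int_{B_p(t+\rho)}|u_k|^2$ all the way down to $u_0$, and only then apply the growth and volume hypotheses to $u_0$ on the resulting large ball. With your fixed increment $\rho=1$ this gives $\int_{B_p(r)}|u_k|^2\le(4b^2)^k\int_{B_p(r+k)}|u_0|^2\le C\,e^{(2d+a)r}\bigl(4b^2e^{2d+a}\bigr)^k$, which is geometric in $k$ and hence sufficient for Lemma~\ref{l2} and Theorem~\ref{holo}, though with a per-step constant different from the one stated. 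To obtain literally the stated constant, take the increment equal to $r$ itself, as the paper does: then the ball after $k$ steps is $B_p((k+1)r)$, the accumulated Caccioppoli factor is $\bigl(\tfrac{4b^2}{r^2}\bigr)^k\le1$ precisely because $r\ge2b$ (this, not ``keeping the cutoff geometry uniform,'' is the role of that hypothesis), and $\sup_{B_p((k+1)r)}|u_0|^2\,V_p((k+1)r)\le c^2C\,e^{(2d+a)(k+1)r}=c^2C\,e^{(2d+a)r}\bigl(e^{(2d+a)r}\bigr)^k$.
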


\begin{proof}[Proof of Lemma \protect\ref{l3}]
We prove part (a) first. Consider the cut-off function%
\begin{equation*}
\phi \left( x\right) :=\left\{
\begin{array}{c}
1 \\
\frac{1}{r}\left( t+r-d\left( p,x\right) \right) \\
0%
\end{array}%
\right.
\begin{array}{c}
\text{on }B_{p}\left( t\right) \\
\text{on }B_{p}\left( t+r\right) \backslash B_{p}\left( t\right) \\
\text{on }M\backslash B_{p}\left( t+r\right) .%
\end{array}%
\end{equation*}%
Integrating by parts, we get%
\begin{eqnarray*}
\int_{M}\left\vert \nabla u_{k}\right\vert ^{2}\phi ^{2} &=&-\int_{M}\left(
\Delta u_{k}\right) \overline{u_{k}}\phi ^{2}-2\int_{M}\left\langle \nabla
u_{k},\nabla \phi \right\rangle \phi \overline{u_{k}} \\
&\leq &2\int_{M}\left\vert \nabla u_{k}\right\vert \left\vert \nabla \phi
\right\vert \phi \left\vert u_{k}\right\vert \\
&\leq &\frac{1}{2}\int_{M}\left\vert \nabla u_{k}\right\vert ^{2}\phi
^{2}+2\int_{M}\left\vert u_{k}\right\vert ^{2}\left\vert \nabla \phi
\right\vert ^{2}.
\end{eqnarray*}%
Consequently, this proves that
\begin{eqnarray*}
\int_{B_{p}\left( t\right) }\left\vert \nabla u_{k}\right\vert ^{2} &\leq
&\int_{M}\left\vert \nabla u_{k}\right\vert ^{2}\phi ^{2} \\
&\leq &4\int_{M}\left\vert u_{k}\right\vert ^{2}\left\vert \nabla \phi
\right\vert ^{2} \\
&=&\frac{4}{r^{2}}\int_{B_{p}\left( t+r\right) }\left\vert u_{k}\right\vert
^{2}.
\end{eqnarray*}%
However,
\begin{eqnarray*}
\left\vert u_{k+1}\right\vert &=&\left\vert \left\langle \nabla u_{k},\nabla
f\right\rangle \right\vert \\
&\leq &\left\vert \nabla u_{k}\right\vert \left\vert \nabla f\right\vert \\
&\leq & b\,\left\vert \nabla u_{k}\right\vert .
\end{eqnarray*}%
Combining this with the above estimate, we conclude
\begin{equation}
\int_{B_{p}\left( t\right) }\left\vert u_{k+1}\right\vert ^{2}\leq \frac{%
4\,b^2}{r^{2}}\int_{B_{p}\left( t+r\right) }\left\vert u_{k}\right\vert ^{2}.
\label{a9}
\end{equation}

This is true for any $t>0$ and for any $k\geq 0$. Iterating (\ref{a9}) leads
to
\begin{eqnarray*}
\int_{B_{p}\left( t\right) }\left\vert u_{k+1}\right\vert ^{2} &\leq &\frac{%
4\,b^{2}}{r^{2}}\int_{B_{p}\left( t+r\right) }\left\vert u_{k}\right\vert
^{2} \\
&\leq &\left( \frac{4\,b^{2}}{r^{2}}\right) ^{2}\int_{B_{p}\left(
t+2r\right) }\left\vert u_{k-1}\right\vert ^{2} \\
&\leq &\left( \frac{4\,b^{2}}{r^{2}}\right) ^{k+1}\int_{B_{p}\left( t+\left(
k+1\right) r\right) }\left\vert u_{0}\right\vert ^{2}.
\end{eqnarray*}%
Choosing now $t=r$ in the above inequality implies%
\begin{equation}
\int_{B_{p}\left( r\right) }\left\vert u_{k}\right\vert ^{2}\leq \left(
\frac{4\,b^{2}}{r^{2}}\right) ^{k}\int_{B_{p}\left( \left( k+1\right)
r\right) }\left\vert u_{0}\right\vert ^{2}.  \label{a10}
\end{equation}%
By the volume growth assumption, we know that
\begin{equation*}
V_{p}\left( (k+1)r)\right) \leq C\,e^{a(k+1)r}.
\end{equation*}%
In view of the growth assumption on $u,$ we conclude
\begin{eqnarray*}
\int_{B_{p}\left( r\right) }\left\vert u_{k}\right\vert ^{2} &\leq &\left(
\frac{4\,b^{2}}{r^{2}}\right) ^{k}\left( \sup_{B_{p}\left( \left( k+1\right)
r\right) }\left\vert u_{0}\right\vert ^{2}\right) \,V_{p}\left(
(k+1)r)\right) \\
&\leq &c^{2}C\,\left( \frac{4\,b^{2}}{r^{2}}\right) ^{k}\,e^{(2d+a)(k+1)r}.
\end{eqnarray*}%
So the conclusion of part (a) follows by noticing that $r\geq 2b.$

To prove part (b), we start from the inequality

\begin{equation*}
\int_{B_{p}\left( r\right) }\left\vert \nabla u_{k}\right\vert ^{2} \leq
\frac{4}{r^{2}}\int_{B_{p}(2r)}\left\vert u_{k}\right\vert ^{2}.
\end{equation*}%
Since $\left\vert\nabla f\right\vert \left( x\right) \leq b\,\left(r(x)
+1\right) $ for all $x\in M,$ it follows that
\begin{eqnarray*}
\left\vert u_{k+1}\right\vert \left( x\right) &=&\left\vert \left\langle
\nabla u_{k},\nabla f\right\rangle \right\vert \left( x\right) \\
&\leq & (b+1)\,r\left( x\right) \left\vert \nabla u_{k}\right\vert \left(
x\right)
\end{eqnarray*}%
for $r\left( x\right) \geq b.$

Combining this with the above estimate, for $r\geq b,$ we have
\begin{equation}
\int_{B_{p}\left( r\right) }\left\vert u_{k+1}\right\vert ^{2}\leq
4\,(b+1)^{2}\,\int_{B_{p}\left( 2r\right) }\left\vert u_{k}\right\vert ^{2}.
\label{s4}
\end{equation}%
This is true for any $k\geq 0.$ By iterating (\ref{s4}) it follows that
\begin{equation}
\int_{B_{p}\left( r\right) }\left\vert u_{k}\right\vert ^{2}\leq \left(
4(b+1)^{2}\right) ^{k}\,\int_{B_{p}\left( 2^{k}r\right) }\left\vert
u_{0}\right\vert ^{2}.  \label{s5}
\end{equation}%
Since
\begin{equation*}
V_{p}\left( R\right) \leq C\,R^{m}\text{ \ for all }R\geq 1
\end{equation*}%
and
\begin{equation*}
|u|(x)\leq c\,(r(x)+1)^{d},
\end{equation*}%
it follows that
\begin{eqnarray*}
\int_{B_{p}\left( r\right) }\left\vert u_{k}\right\vert ^{2} &\leq &\left(
4(b+1)^{2}\right) ^{k}\,\left( \sup_{B_{p}\left( 2^{k}r\right) }\left\vert
u_{0}\right\vert ^{2}\right) \,V_{p}\left( 2^{k}r\right) \\
&\leq &c^{2}C\,\left( 4(b+1)^{2}\,2^{(m+2d)}\right) ^{k}\,r^{m+2d}.
\end{eqnarray*}%
We have thus proved that
\begin{equation}
\int_{B_{p}\left( r\right) }\left\vert u_{k}\right\vert ^{2}\leq C\,_{0}\mu
^{k}  \label{s6}
\end{equation}%
for all $k\geq 0$ and $r\geq b,$ where $\mu :=(b+1)^{2}\,2^{m+2d+2}.$ This
proves the lemma.
\end{proof}

We are now in position to prove Theorem \ref{holo}. For the sake of
convenience, we state it again here.

\begin{theorem}
\label{holo'}Let $M$ be a complete K\"{a}hler manifold. Assume that there
exists a proper function $f$ on $M$ such that $f_{\alpha \beta }=0$ with
respect to unitary frames. Then for all $d>0,$

(a) $\dim E(d)<\infty$ if $|\nabla f|$ is bounded and $M$ has exponential
volume growth.

(b) $\dim P(d)<\infty$ if $|\nabla f|$ grows at most linearly and $M$ has
polynomial volume growth.
\end{theorem}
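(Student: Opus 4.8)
The plan is to prove a uniform bound on $\dim W$ for an arbitrary finite-dimensional subspace $W\subseteq E(d)$ (part (a)) or $W\subseteq P(d)$ (part (b)); since $W$ is arbitrary this gives $\dim E(d)<\infty$, resp. $\dim P(d)<\infty$. Fix $c_0<r_0<t<\sup_M f$ with $r_0$ a regular value of $f$ (possible by Sard), so that $\partial D(r_0)$ is a compact hypersurface on which $|\nabla f|$ is bounded above and below. I would equip $W$ with the Hermitian inner product $\langle u,v\rangle:=\int_{\partial D(r_0)}u\,\overline v\,|\nabla f|^{-1}$, writing $\|\cdot\|$ for the associated norm. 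This form is positive definite on $W$: if $\langle u,u\rangle=0$ then $u$ vanishes on the real hypersurface $\partial D(r_0)$, so $u\equiv 0$ by the unique continuation property of holomorphic functions already used in the proof of Lemma \ref{l2}.

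Next I would combine Lemmas \ref{l3} and \ref{l2} to turn the map $Tu:=u_1=\langle\nabla u,\nabla f\rangle$ from \eqref{a3} into a controlled, self-adjoint operator. Since $f$ is proper, $D(t)$ is compact, hence contained in some $B_p(R)$ with $R\geq 2b$ in case (a) and $R\geq b$ in case (b). Then Lemma \ref{l3} gives, for every $u\in W$, $\int_{D(t)}|u_k|^2\leq\int_{B_p(R)}|u_k|^2\leq C_0(u)\,\mu^k$ for all $k$, with $\mu=e^{(2d+a)R}$ in case (a) and $\mu=(b+1)^2\,2^{m+2d+2}$ in case (b); crucially $\mu$ is independent of $u$ (only the constant $C_0(u)$ depends on $u$). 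Thus $u$ — and then $u_1,u_2,\dots$ as well, since $\int_{D(t)}|u_{j+m}|^2\leq C_0(u)\mu^{j}\cdot\mu^{m}$ — satisfies the hypothesis of Lemma \ref{l2}. Applying Lemma \ref{l2} to each $u_j$ yields $\int_{\partial D(r_0)}|u_{j+1}|^2|\nabla f|^{-1}\leq\mu\int_{\partial D(r_0)}|u_{j}|^2|\nabla f|^{-1}$, hence $\|u_k\|\leq\mu^{k/2}\|u\|$ for all $k$ and all $u\in W$; in particular $\|Tu\|\leq\mu^{1/2}\|u\|$. Moreover, applying the Green formula \eqref{a5} to $w=u$ and $v=\overline{v'}$ — both harmonic, being respectively holomorphic and antiholomorphic — exactly as in the derivation of \eqref{a6}, gives $\langle Tu,v'\rangle=\langle u,Tv'\rangle$ for all holomorphic $u,v'$. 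Consequently the compression $A:=P_W\circ T|_W:W\to W$, with $P_W$ the orthogonal projection onto $W$, is self-adjoint with operator norm at most $\mu^{1/2}$, so $W$ has an orthonormal basis of eigenvectors of $A$ with eigenvalues in $[-\mu^{1/2},\mu^{1/2}]$.

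The remaining and, I expect, hardest step is to bound $N=\dim W$, i.e. the total multiplicity of the eigenvalues of $A$; membership in a bounded interval is not enough by itself, so one must extract from $T$ that the relevant part of its "spectrum" is discrete with finite multiplicities, with bounds depending only on $d$ and on the local geometry of $M$ and of $f$ near $\partial D(r_0)$. I see two natural routes. One is a reconstruction argument: along the gradient flow $\psi_\tau$ of $\nabla f$ one has $u(\psi_\tau x)=\sum_{k\geq 0}\frac{\tau^k}{k!}u_k(x)$ (the $u_k$ satisfy $\dot y_k=y_{k+1}$ along the flow), and this series converges locally uniformly because $\|u_k\|\leq\mu^{k/2}\|u\|$ together with the subharmonicity of $|u_k|^2$ and the uniform positive lower bound on volumes of small balls about the compact set $\partial D(r_0)$ forces $|u_k|\lesssim\mu^{k/2}$ there; hence $u\in W$ is determined by the sequence $(u_k|_{\partial D(r_0)})_k$, and one bounds the dimension of the space of sequences arising this way by analyzing the flow back toward the minimum set of $f$. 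The other route uses that holomorphic functions with a common $L^2$ bound on $\overline{D(t)}$ form a normal family, so that $T$ followed by the compact restriction of $L^2(D(t))$ to its holomorphic subspace is compact, which confines $E(d)$ (resp. $P(d)$) to a finite-dimensional spectral subspace of $T$. Either way one concludes $\dim E(d)<\infty$ and $\dim P(d)<\infty$; and in the shrinking-soliton setting of Theorem \ref{Shrinking} the Cao–Zhou estimates render $\mu$, the choices of $r_0,t$, and all the local geometric quantities explicit in $n$ and $d$, producing the universal bound $C(n,d)$.
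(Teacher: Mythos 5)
Your preliminary steps are sound and essentially reproduce the paper's setup: Lemma \ref{l3} gives $\int_{D(t)}\left\vert u_{k}\right\vert ^{2}\leq C_{0}(u)\,\mu ^{k}$ with $\mu$ independent of $u$, applying Lemma \ref{l2} to each $u_{j}$ gives $\int_{\partial D(r_{0})}\left\vert u_{j+1}\right\vert ^{2}\left\vert \nabla f\right\vert ^{-1}\leq \mu \int_{\partial D(r_{0})}\left\vert u_{j}\right\vert ^{2}\left\vert \nabla f\right\vert ^{-1}$, and the symmetry of $T$ in your boundary inner product does follow from \eqref{a5}. But the proof stops exactly where you admit the hard part begins, and neither of your two routes closes the gap. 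The spectral route cannot work as stated: the bound $\Vert T|_{W}\Vert \leq \mu ^{1/2}$ only places $W$ inside the spectral subspace of $T$ for the interval $[-\mu ^{1/2},\mu ^{1/2}]$, which is a neighborhood of $0$; for a compact self-adjoint operator only the spectral subspaces for $\left\vert \lambda \right\vert \geq \epsilon >0$ are finite dimensional, while the one near $0$ is typically infinite dimensional, so compactness of $T$ yields no bound on $\dim W$. The gradient-flow reconstruction route is not an argument as written: even granting $u(\psi _{\tau }x)=\sum_{k}\frac{\tau ^{k}}{k!}u_{k}(x)$, the space of boundary-data sequences satisfying $\Vert u_{k}\Vert \leq \mu ^{k/2}\Vert u_{0}\Vert $ is a priori infinite dimensional, and the step of "analyzing the flow back toward the minimum set" is never carried out.

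The missing idea is elementary and requires no operator theory. From $\left\langle \nabla \left\vert u\right\vert ^{2},\nabla f\right\rangle =u\overline{u_{1}}+\overline{u}u_{1}\leq \left\vert u\right\vert ^{2}+\left\vert u_{1}\right\vert ^{2}$ and the coarea consequence $\int_{D(t)}\left\vert u_{1}\right\vert ^{2}\leq \mu \int_{D(t)}\left\vert u\right\vert ^{2}$ of Lemma \ref{l2}, one gets $\int_{D(t)}\left\langle \nabla \left\vert u\right\vert ^{2},\nabla f\right\rangle \leq (1+\mu )\int_{D(t)}\left\vert u\right\vert ^{2}$; the divergence theorem \eqref{a14} turns the left side into $-\int_{D(t)}\left\vert u\right\vert ^{2}\Delta f+\int_{\partial D(t)}\left\vert u\right\vert ^{2}\left\vert \nabla f\right\vert $, whence $\int_{\partial D(t)}\left\vert u\right\vert ^{2}\left\vert \nabla f\right\vert ^{-1}\leq C\int_{D(t)}\left\vert u\right\vert ^{2}$ for all regular $t$ near a fixed $t_{0}$. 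Since the left side equals $\frac{d}{dt}\int_{D(t)}\left\vert u\right\vert ^{2}$ by the coarea formula, integrating this differential inequality gives the reverse estimate $\int_{D(t_{0})}\left\vert u\right\vert ^{2}\leq e^{\varepsilon C}\int_{D(t_{0}-\varepsilon )}\left\vert u\right\vert ^{2}$ uniformly over $u$ in $E(d)$ or $P(d)$, which is \eqref{a18}. That is the inequality that actually bounds the dimension: combined with P. Li's lemma (there is $u$ in the space with $\int_{D(t_{0}-\varepsilon )}\left\vert u\right\vert ^{2}\leq \frac{nV}{\dim H}\sup_{D(t_{0}-\varepsilon )}\left\vert u\right\vert ^{2}$) and the mean value inequality for the subharmonic function $\left\vert u\right\vert $, it yields $\dim H\leq C$. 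Equivalently, it shows the unit ball of $H$ in $L^{2}(D(t_{0}))$ is compact, hence $H$ is finite dimensional; this is the rigorous version of what your normal-family remark is reaching for.
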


\begin{proof}[Proof of Theorem \protect\ref{holo'}]
Fix a regular value $t_{0}$ of $f$ and choose $r_{0}>0$ so that $D\left(
t_{0}\right) \subset B_{p}\left( r_{0}\right) $. By Lemma \ref{l3}, for $%
u\in E(d)$ under the assumptions of (a) or for $u\in P(d)$ under the
assumptions of (b), we have
\begin{equation*}
\int_{D\left( t_{0}\right) }\left\vert u_{k}\right\vert ^{2}\leq
\int_{B_{p}\left( r_{0}\right) }\left\vert u_{k}\right\vert ^{2}\leq
C_{0}\mu ^{k}
\end{equation*}%
for some constant $\mu $ independent of $k.$ So by Lemma \ref{l2},
\begin{equation*}
\int_{\partial D\left( t\right) }\left\vert u_{1}\right\vert ^{2}\left\vert
\nabla f\right\vert ^{-1}\leq \mu \int_{\partial D\left( t\right)
}\left\vert u_{0}\right\vert ^{2}\left\vert \nabla f\right\vert ^{-1}
\end{equation*}%
for any regular value $t$ of $f$ such that $c_{0}\leq t\leq t_{0}.$ Since $%
u_{1}$ is zero whenever $\nabla f=0,$ it is easy to see, by the co-area
formula, that%
\begin{equation*}
\int_{D\left( t_{0}\right) }\left\vert u_{1}\right\vert ^{2}\leq \mu
\int_{D\left( t_{0}\right) }\left\vert u_{0}\right\vert ^{2}.
\end{equation*}

Since
\begin{eqnarray*}
\int_{D\left( t_{0}\right) }\left\langle \nabla \left\vert u_{0}\right\vert
^{2},\nabla f\right\rangle &=&\int_{D\left( t_{0}\right) }u_{0}\overline{%
u_{1}}+\overline{u_{0}}u_{1} \\
&\leq &2\int_{D\left( t_{0}\right) }\left\vert u_{0}\right\vert \left\vert
u_{1}\right\vert \\
&\leq &\int_{D\left( t_{0}\right) }\left\vert u_{0}\right\vert
^{2}+\int_{D\left( t_{0}\right) }\left\vert u_{1}\right\vert ^{2},
\end{eqnarray*}%
we conclude

\begin{equation}
\int_{D\left( t_{0}\right) }\left\langle \nabla \left\vert u_{0}\right\vert
^{2},\nabla f\right\rangle \leq \left( 1+\mu\right) \int_{D\left(
t_{0}\right) }\left\vert u_{0}\right\vert ^{2}.  \label{a14'}
\end{equation}%
Plugging this into the following integration by parts formula
\begin{equation}
\int_{D\left( t_{0}\right) }\left\langle \nabla \left\vert u_{0}\right\vert
^{2},\nabla f\right\rangle =-\int_{D\left( t_{0}\right) }\left\vert
u_{0}\right\vert ^{2}\Delta f+\int_{\partial D\left( t_{0}\right)
}\left\vert u_{0}\right\vert ^{2}\frac{\partial f}{\partial \nu }
\label{a14}
\end{equation}%
and noting that $\frac{\partial f}{\partial \nu }=\left\vert \nabla
f\right\vert,$ we arrive at
\begin{equation}
\int_{\partial D\left( t_{0}\right) }\left\vert u_{0}\right\vert
^{2}\left\vert \nabla f\right\vert \leq \left( \sup_{D\left( t_{0}\right)
}\left\vert \Delta f\right\vert +1+\mu\right) \int_{D\left( t_{0}\right)
}\left\vert u_{0}\right\vert ^{2}.  \label{a15}
\end{equation}%
Since $t_0$ is a regular value of $f,$
\begin{equation*}
C_1\left( t_{0}\right) :=\inf_{\partial D\left( t_{0}\right) }\left\vert
\nabla f\right\vert >0.
\end{equation*}%
Therefore, we may rewrite (\ref{a15}) into

\begin{equation*}
\int_{\partial D\left( t_{0}\right) }\left\vert u_{0}\right\vert
^{2}\left\vert \nabla f\right\vert ^{-1}\leq C_{2}\left( t_{0}\right)
\int_{D\left( t_{0}\right) }\left\vert u_{0}\right\vert ^{2},  
\end{equation*}%
where $C_2(t_0)$ is a constant depending on $\mu$, $\sup_{D(t_0)}\,|\Delta f|$ and $%
C_1(t_0).$

For the regular value $t_{0}$ of $f,$ we let $\varepsilon >0$ be
sufficiently small so that any $t$ with $t_{0}-\varepsilon \leq t\leq t_{0}$
is a regular value of $f$ as well. Such $\varepsilon $ depends on $%
\sup_{D\left( t_{0}\right) }\left\vert \mathrm{Hess}\left( f\right)
\right\vert $. The preceding argument also implies

\begin{equation}
\int_{\partial D\left( t\right) }\left\vert u_{0}\right\vert ^{2}\left\vert
\nabla f\right\vert ^{-1}\leq C\left( t_{0}\right) \int_{D\left( t\right)
}\left\vert u_{0}\right\vert ^{2} \label{a16}
\end{equation}%
with the constant $C(t_{0})$ now depending on $\mu$, $\sup_{D(t_{0})}\,|\Delta f|$
and $C_{3}(t_{0}),$ where

\begin{equation*}
C_{3}\left( t_{0}\right) :=\inf_{D\left( t_{0}\right) \backslash D\left(
t_{0}-\varepsilon \right) }\left\vert \nabla f\right\vert >0.
\end{equation*}

Integrating (\ref{a16}) from $t:=t_{0}-\varepsilon $ to $t:=t_{0}$ implies
\begin{equation}
\int_{D\left( t_{0}\right) }\left\vert u_{0}\right\vert ^{2}\leq
e^{\varepsilon \,C\left( t_{0}\right) }\int_{D\left( t_{0}-\varepsilon
\right) }\left\vert u_{0}\right\vert ^{2}.  \label{a18}
\end{equation}%
The inequality (\ref{a18}) is true for any $u=u_{0}\in E(d)$ in case of (a)
and $u=u_{0}\in P(d)$ in case of (b). It is well known that this implies a
dimension estimate as claimed in the theorem. We will follow \cite{L2} to
supply some details here. Denote by $H$ to be $E(d)$ in case of (a) and $%
P(d) $ in case of (b). By a result of P. Li (see \cite{L2}), there exists $%
u_{0}\in H$ so that
\begin{equation}
\int_{D(t_0-\varepsilon)}\left\vert u_{0}\right\vert ^{2}\leq \frac{%
n\,V(D(t_0-\varepsilon))} {\dim H}\sup_{D(t_0-\varepsilon)}\left\vert
u_{0}\right\vert ^{2}.
\end{equation}%
On the other hand, applying the Moser iteration argument to the subharmonic
function $|u_0|$, one obtains that
\begin{equation}
\sup_{ D(t_0-\varepsilon)}\left\vert u_{0}\right\vert ^{2}\leq \frac{C}{%
V(D(t_0))}\, \int_{D(t_0)}\left\vert u_{0}\right\vert ^{2}  \label{a20}
\end{equation}%
for some constant $C$ depending on $D(t_0).$ By combining these two
inequalities, it follows that
\begin{equation*}
\int_{D(t_0-\varepsilon)}\left\vert u_{0}\right\vert ^{2}\leq \frac{C}{\dim H%
}\int_{D(t_0)}\left\vert u_{0}\right\vert ^{2}.
\end{equation*}%
In view of (\ref{a18}), one concludes $\dim H\leq C.$ The theorem is proved.
\end{proof}

\section{Shrinking solitons}

In this section we prove Theorem \ref{Shrinking} which is restated below.

\begin{theorem}
\label{shrink}Let $\left( M^{n},g,f\right) $ be a gradient K\"{a}hler Ricci
shrinking soliton of complex dimension $n$. Then $\dim P(d)\leq C(n,d),$ a
constant depending only on $n$ and $d,$ for all $d>0.$
\end{theorem}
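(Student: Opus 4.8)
The plan is to recast the statement spectrally, working with the weighted measure $e^{-f}\,dV$ for which the shrinking soliton equation supplies uniform control. Normalize $\lambda=\frac12$, so that $S+|\nabla f|^2=f$, $S+\Delta f=\frac n2$, $S\ge 0$, and recall the Cao--Zhou estimates $\frac14(r-c(n))^2\le f\le\frac14(r+c(n))^2$ and $V_p(R)\le c(n)R^n$; in particular $f\ge 0$ on $M$ and $D(t)\subset B_p(2\sqrt t+c(n))$. First I would note that $P(d)$ embeds into $L^2_f:=L^2(M,e^{-f}\,dV)$: from $|u|\le c(r+1)^d$, $f\ge\frac14(r-c(n))^2$ and $V_p(R)\le c(n)R^n$ the Gaussian weight makes $\int_M|u|^2e^{-f}\,dV$ finite, and in the same way $\int_M e^{-f}\,dV\le C(n)$; this applies equally to every $u_k$, which by Lemma \ref{l1} is holomorphic of polynomial growth. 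On $L^2_f$ the drift Laplacian $\Delta_f:=\Delta-\langle\nabla f,\nabla\,\cdot\,\rangle$ is self-adjoint, and since holomorphic functions are harmonic one has $\Delta_f u_k=\Delta u_k-\langle\nabla f,\nabla u_k\rangle=-u_{k+1}$; hence $u_k=(-\Delta_f)^k u$ for all $k\ge 0$.

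The decisive input is the Bakry-\'Emery positivity $\mathrm{Ric}_f:=\mathrm{Ric}+\mathrm{Hess}(f)=\tfrac12 g>0$. Combined with $\int_M e^{-f}\,dV\le C(n)$, this forces $-\Delta_f$ to have discrete spectrum $0=\lambda_0<\lambda_1\le\lambda_2\le\cdots\to\infty$ with orthonormal eigenfunctions $\phi_j$, together with a heat-kernel (Weyl-type) counting bound
\begin{equation*}
N(\Lambda):=\#\{\,j:\lambda_j\le\Lambda\,\}\le C(n)\,\Lambda^{\,n},\qquad\Lambda\ge 1 .
\end{equation*}
These spectral facts are essentially known for shrinkers, but setting them up carefully is part of the work.

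Next I would control the growth of $u_k$ in $L^2_f$. The soliton identities give $|\nabla f|=\sqrt{f-S}\le\tfrac12(r+c(n))\le b(n)(r+1)$ and polynomial volume growth of order $m=n$ with universal constant, so Lemma \ref{l3}(b) applies and yields $\int_{B_p(\rho)}|u_k|^2\le C_0\,\mu_0^{\,k}$ with $\mu_0=4(b+1)^2\,2^{\,n+2d}$. Integrating over sublevel sets, $\int_M|u_k|^2e^{-f}\,dV=\int_0^\infty e^{-t}\bigl(\int_{D(t)}|u_k|^2\bigr)\,dt$, and using $D(t)\subset B_p(2\sqrt t+c(n))$ the $t$-integral converges at the cost of only a Gamma factor, giving
\begin{equation*}
\|u_k\|_{L^2_f}^2=\int_M|u_k|^2e^{-f}\,dV\le C_1\,\mu^{\,k}\qquad\text{for all }k\ge 0,
\end{equation*}
with $\mu=\mu(n,d)$ and $C_1$ depending only on $u,n,d$. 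The spectral theorem then closes the argument: writing $u=\sum_j c_j\phi_j$ we get $\sum_j|c_j|^2\lambda_j^{2k}=\|u_k\|_{L^2_f}^2\le C_1\mu^k$, so for each $j$ with $c_j\ne 0$, $\lambda_j\le\mu^{1/2}(C_1/|c_j|^2)^{1/(2k)}\to\mu^{1/2}$ as $k\to\infty$, whence $c_j=0$ whenever $\lambda_j>\sqrt\mu$. Therefore $P(d)\subseteq\mathrm{span}\{\phi_j:\lambda_j\le\sqrt\mu\}$ and $\dim P(d)\le N(\sqrt\mu)\le C(n)(\sqrt\mu)^n=:C(n,d)$.

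I expect the main obstacle to be producing the bound $\|u_k\|_{L^2_f}^2\le C_1\mu^k$ with $\mu$ of the right size. The iteration behind Lemma \ref{l3}(b) doubles the ball at each step, which forces $\mu\asymp 2^{\,2d}$; this is already enough to prove a finite bound $C(n,d)$, but only exponential in $d$. To reach the sharp size $\mu\asymp d^2$ (hence $C(n,d)$ polynomial in $d$, as asserted after the theorem) one needs a sharp gradient estimate $|\nabla u_k|(x)\lesssim r(x)^{-1}\sup_{B_x(r(x)/2)}|u_k|$ on the soliton, which in turn requires controlling the unweighted curvature on large balls --- exactly where $\mathrm{Ric}_f=\tfrac12 g$ and the Cao--Zhou geometry must be used again. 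One can also bypass the spectral language and simply revisit the proof of Theorem \ref{holo'} line by line, checking that on a normalized shrinker the constant $\mu$ of Lemma \ref{l3}(b), the quantity $\sup_{D(t_0)}|\Delta f|=\sup_{D(t_0)}|\tfrac n2-S|\le\tfrac n2+t_0$, the reverse--doubling rate, P.\ Li's lemma, and the mean value inequality on $D(t_0)$ all have constants depending only on $n$ and $d$; the only genuinely new point there is the mean value inequality, which, since the unweighted Ricci curvature is uncontrolled, one obtains from the Wei--Wylie weighted volume comparison (legitimate because $0\le f\le t_0$ on $D(t_0)$), producing a uniform weighted Moser iteration and then passing between weighted and unweighted $L^2$ norms via $e^{-t_0}\le e^{-f}\le 1$ on $D(t_0)$.
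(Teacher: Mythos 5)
Your spectral reformulation is genuinely different from the paper's argument, and most of it is sound: the embedding $P(d)\hookrightarrow L^{2}(e^{-f}dV)$, the identity $\Delta_{f}u_{k}=-u_{k+1}$ (so $u_{k}=(-\Delta_{f})^{k}u$, legitimate because $\Delta_{f}$ is essentially self-adjoint on a complete weighted manifold and every $u_{k}$ lies in $L^{2}_{f}$ by the Gamma-factor computation you indicate), and the conclusion that $u$ is spectrally supported on $\{\lambda_{j}\le\sqrt{\mu}\}$ are all correct; discreteness of the spectrum of $\Delta_{f}$ on a shrinker is indeed known. The gap is the uniform Weyl-type bound $N(\Lambda)\le C(n)\Lambda^{n}$, which you label ``essentially known'' but which, in your formulation, carries the entire content of the theorem. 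Bakry--\'Emery positivity $\mathrm{Ric}_{f}=\frac12 g$ together with $\int_{M}e^{-f}\le C(n)$ yields a dimension-free log-Sobolev inequality, hence hypercontractivity of $e^{t\Delta_{f}}$, but not ultracontractivity: the on-diagonal weighted heat kernel bound (equivalently a uniform Faber--Krahn or Sobolev inequality, which is what actually produces $\mathrm{Tr}\,e^{t\Delta_{f}}\le C(n)t^{-n}$ and hence the counting bound) does not follow from these two hypotheses and has to be extracted from the specific local geometry of shrinkers. Without it your argument only gives $\dim P(d)<\infty$, which is already contained in Theorem \ref{holo}(b). The missing uniform local input is precisely the Sobolev inequality on $B_{p}(4r_{0})$ from \cite{MW} that the paper invokes for its Moser iteration; your route does not avoid that ingredient, it relocates it inside the spectral counting function, at the cost of extra operator-theoretic overhead.

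For comparison, the paper's proof shares your starting point --- Lemma \ref{l3}(b) with $\mu=c(n,d)$, and the passage to the weighted measure $e^{-f}dV$ via Lemma \ref{l2} and the co-area formula --- but then uses a single integration by parts against $e^{-f}$: since $\Delta_{f}f=\frac n2-f$, one obtains $\int_{D(t)}|u_{0}|^{2}\left(f-\frac n2-1-\mu\right)e^{-f}\le 0$, hence a reverse-doubling estimate $\int_{B_{p}(4r_{0})}|u_{0}|^{2}\le K\int_{B_{p}(3r_{0})}|u_{0}|^{2}$ with $r_{0},K$ depending only on $(n,d)$, and concludes with P.~Li's lemma and Moser iteration. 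Note also that your fallback of ``revisiting the proof of Theorem \ref{holo} line by line'' meets a separate obstruction: the constants there involve $\inf_{\partial D(t_{0})}|\nabla f|$ and an $\varepsilon$ controlled by $\sup|\mathrm{Hess}(f)|$, neither of which can be bounded in terms of $(n,d)$ alone; the point of the $e^{-f}$-weighted identity above is exactly to avoid ever dividing by $|\nabla f|$ on a level set.
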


\begin{proof}[Proof of Theorem \protect\ref{shrink}]
Recall that on a shrinking soliton, by normalizing the metric so that $%
\lambda =\frac{1}{2}$ and adding a suitable constant to the potential
function $f,$ one has

\begin{equation}
S+\left\vert \nabla f\right\vert ^{2}=f \text{ and \ \ }S+\Delta f=\frac{n}{2%
}.  \label{s1}
\end{equation}%
Note that by Chen \cite{C}
\begin{equation*}
S\geq 0.
\end{equation*}
So we have
\begin{equation*}
|\nabla f|^2\leq f.
\end{equation*}
Also, by Cao and Zhou \cite{CZ},
\begin{equation}
\frac{1}{4}\left( d\left( p,x\right) -c\left( n\right) \right) ^{2}\leq
f\left( x\right) \leq \frac{1}{4}\left( d\left( p,x\right) +c\left( n\right)
\right) ^{2}  \label{s2}
\end{equation}%
and

\begin{equation*}
V_p(R)\le c(n)\,R^n
\end{equation*}
for $R\geq 1,$ where $p\in M$ is a minimum point for $f$ and the constant $%
c(n)$ depends only on the dimension $n$ of $M.$

In particular, one concludes that $f$ is proper on $M$ and
\begin{equation*}
|\nabla f|(x)\le \frac{1}{2}\,r(x)+c(n).
\end{equation*}
So by Lemma \ref{l3}, for $u\in P(d)$ and $r\geq c(n),$
\begin{equation}
\int_{B_{p}\left( r\right) }\left\vert u_{k}\right\vert ^{2}\leq C\, \mu ^{k}
\end{equation}%
for all $k\geq 0,$ where $\mu=c(n,d).$

Now by Lemma \ref{l2} we have
\begin{equation}
\int_{\partial D\left( s\right) }\left\vert u_{1}\right\vert ^{2}\left\vert
\nabla f\right\vert ^{-1}\leq \mu \int_{\partial D\left( s\right)
}\left\vert u_{0}\right\vert ^{2}\left\vert \nabla f\right\vert ^{-1},
\label{s8}
\end{equation}%
for any regular value $s$ of $f.$ Using (\ref{s8}) and the co-area formula,
we obtain
\begin{eqnarray*}
\int_{D\left( t\right) }\left\vert u_{1}\right\vert ^{2}e^{-f}
&=&\int_{0}^{t}e^{-s}\left( \int_{\partial D\left( s\right) }\left\vert
u_{1}\right\vert ^{2}\left\vert \nabla f\right\vert ^{-1}\right) ds \\
&\leq &\mu \int_{0}^{t}e^{-s}\left( \int_{\partial D\left( s\right)
}\left\vert u_{0}\right\vert ^{2}\left\vert \nabla f\right\vert ^{-1}\right)
ds \\
&=&\mu \int_{D\left( t\right) }\left\vert u_{0}\right\vert ^{2}e^{-f}.
\end{eqnarray*}%
Hence, we have established that
\begin{equation}
\int_{D\left( t\right) }\left\vert u_{1}\right\vert ^{2}e^{-f}\leq \mu
\int_{D\left( t\right) }\left\vert u_{0}\right\vert ^{2}e^{-f}  \label{s9}
\end{equation}%
for all $t.$ Using (\ref{s1}) we have that
\begin{gather}
\int_{D\left( t\right) }\left\vert u_{0}\right\vert ^{2}\left( f-\frac{n}{2}%
\right) e^{-f}=-\int_{D\left( t\right) }\left\vert u_{0}\right\vert
^{2}\Delta _{f}\left( f\right) e^{-f}  \label{s10} \\
=\int_{D\left( t\right) }\left\langle \nabla \left\vert u_{0}\right\vert
^{2},\nabla f\right\rangle e^{-f}-\int_{\partial D\left( t\right)
}\left\vert u_{0}\right\vert ^{2}\frac{\partial f}{\partial \nu }e^{-f}
\notag \\
\leq \int_{D\left( t\right) }\left\langle \nabla \left\vert u_{0}\right\vert
^{2},\nabla f\right\rangle e^{-f}.  \notag
\end{gather}%
In the last line above, we have used that $\frac{\partial f}{\partial \nu }%
=\left\vert \nabla f\right\vert \geq 0.$ As in the proof of (\ref{a14'}), we
have
\begin{eqnarray*}
\left\vert \int_{D\left( t\right) }\left\langle \nabla \left\vert
u_{0}\right\vert ^{2},\nabla f\right\rangle e^{-f}\right\vert &\leq
&2\int_{D\left( t_{0}\right) }\left\vert u_{0}\right\vert \left\vert
u_{1}\right\vert e^{-f} \\
&\leq &\int_{D\left( t\right) }\left\vert u_{0}\right\vert
^{2}e^{-f}+\int_{D\left( t\right) }\left\vert u_{1}\right\vert ^{2}e^{-f} \\
&\leq &\left( 1+\mu \right) \int_{D\left( t\right) }\left\vert
u_{0}\right\vert ^{2}e^{-f},
\end{eqnarray*}
where in the last step we have used (\ref{s9}). By (\ref{s10}), this means
that
\begin{equation*}
\int_{D\left( t\right) }\left\vert u_{0}\right\vert ^{2}\left( f-\frac{n}{2}%
-1-\mu \right) e^{-f}\leq 0 \text{ \ for all }t.
\end{equation*}%
Since the constant $\mu=c(n,d),$ by choosing sufficiently large $t=t_{0}$
depending only on $n$ and $d$ it follows that
\begin{equation}
\int_{D\left( 5\,t_{0}\right) }\left\vert u_{0}\right\vert ^{2}\leq
K\int_{D\left( t_{0}\right) }\left\vert u_{0}\right\vert ^{2},  \label{s11}
\end{equation}%
where $K$ is a constant depending only on $n$ and $d.$ In view of (\ref{s2}%
), $t_0$ can be chosen in such a way that

\begin{equation*}
D\left( t_{0}\right)\subset B_p(3\,r_0)
\end{equation*}
and
\begin{equation*}
B_p(4\,r_0)\subset D\left( 5\,t_{0}\right),
\end{equation*}
where $r_0=\sqrt{t_0}.$ Hence, we conclude

\begin{equation}
\int_{B_p \left( 4\,r_{0}\right) }\left\vert u_{0}\right\vert ^{2}\leq
K\,\int_{B_p \left( 3\,r_{0}\right) }\left\vert u_{0}\right\vert ^{2}
\label{s12}
\end{equation}%
for some $r_0$ depending only on $n$ and $d.$

Notice that (\ref{s12}) is true for any $u_{0}\in P(d).$

This implies that $\dim P(d) \leq C\left( n,d\right) .$ Indeed, by a result
of P. Li (see \cite{L2}), there exists nontrivial $u_{0}\in P(d)$ so that
\begin{equation}
\int_{B_p\left(3\, r_{0}\right) }\left\vert u_{0}\right\vert ^{2}\leq \frac{%
n\,V_p(3r_0)} {\dim P(d)}\,\sup_{B_p \left(3\, r_{0}\right) }\left\vert
u_{0}\right\vert ^{2}.  \label{s13}
\end{equation}%
On the other hand, the Sobolev constant of $B_p\left( 4\,r_{0}\right) $ can
be controlled by a constant depending only on $n$ and $r_0$ (see \cite{MW}).
Therefore, applying the Moser iteration argument to the subharmonic function
$|u_0|$, we obtain that
\begin{equation}
\sup_{B_p \left(3\,r_{0}\right) }\left\vert u_{0}\right\vert ^{2}\leq \frac{%
C( n,d) }{V_p(4\,r_{0})}\, \int_{B_p\left( 4\,r_{0}\right) }\left\vert
u_{0}\right\vert ^{2}.  \label{s14}
\end{equation}%
By combining (\ref{s13}) and (\ref{s14}), it follows that
\begin{equation*}
\int_{B_p \left(3\,r_{0}\right) }\left\vert u_{0}\right\vert ^{2}\leq \frac{%
C(n,d)}{\dim P(d)}\int_{B_p \left( 4\,r_{0}\right) }\left\vert
u_{0}\right\vert ^{2}.
\end{equation*}%
In view of (\ref{s12}), we have $\dim P(d)\leq C(n,d).$ This proves the
theorem.
\end{proof}

\section{Holomorphic forms}

In this section, we will deal with the space of holomorphic forms and prove
Theorem \ref{form} which is restated below.

\begin{theorem}
\label{forms}Let $\left( M^{n},g\right) $ be a complete K\"{a}hler manifold.
Assume that there exists a proper smooth function $f$ on $M$ with bounded
gradient and $f_{\alpha \beta }=0$ in unitary frames. Then the dimension of
the space of $L^{2}$ holomorphic $\left( p,0\right) $ forms is finite for $%
0\leq p\leq n.$
\end{theorem}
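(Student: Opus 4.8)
The plan is to mimic the scheme already used for Theorem \ref{holo'}, namely: produce a sequence of $L^2$ holomorphic $(p,0)$-forms $\omega_k$ obtained by differentiating in the $\nabla f$ direction, show the $L^2$ norms of $\omega_k$ on all of $M$ grow at most geometrically in $k$, feed this into a reverse-Poincar\'e/mean-value inequality on a fixed compact sublevel set $D(t_0)$, and then invoke P.~Li's lemma on finite-dimensional function spaces together with Moser iteration to bound $\dim F(p)$.

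First I would define the analogue of the operator $u\mapsto\langle\nabla u,\nabla f\rangle$ on forms. For a holomorphic $(p,0)$-form $\omega$, set $\omega_0:=\omega$ and $\omega_{k+1}:=\nabla_{\nabla f}\,\omega_k$ (covariant derivative in the direction $\overline{\partial f}$, i.e. in coordinates $(\omega_{k+1})_{\alpha_1\cdots\alpha_p}=(\omega_k)_{\alpha_1\cdots\alpha_p;\overline\beta}\,f_{\beta}$ — I will pick whichever index convention makes $\omega_{k+1}$ again of type $(p,0)$). The first key step is the analogue of Lemma \ref{l1}: each $\omega_k$ is holomorphic. This should follow by induction exactly as in Lemma \ref{l1}, differentiating $(\omega_{k+1})_{\cdots;\overline\delta}$, using that $\omega_k$ is $\overline\partial$-closed and that $f_{\alpha\beta}=0$; the only extra ingredient is commuting covariant derivatives, which introduces curvature terms $R_{\alpha\overline\delta\beta\overline\gamma}$ contracted against $f_\beta$ — and $R_{\alpha\overline\delta\beta\overline\gamma}f_\beta$ can be rewritten using the Bianchi identity and $f_{\alpha\beta}=0$, which forces $\nabla J(\nabla f)$ to be skew (the Killing property quoted in the introduction), so these terms either cancel or reassemble into a term that is itself $\overline\partial$-closed. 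I expect this commutator bookkeeping to be the main obstacle; everything else is parallel to the scalar case.

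Next, the growth estimate: the pointwise Kato-type bound $|\omega_{k+1}|\le|\nabla\omega_k|\,|\nabla f|\le b\,|\nabla\omega_k|$ together with the integration-by-parts/cutoff argument from Lemma \ref{l3}(a) — valid because $|\omega_k|^2$ is subharmonic (a holomorphic form has $\Delta|\omega_k|^2\ge 2|\nabla\omega_k|^2\ge 0$ by Bochner, or one simply uses that the components are harmonic in a local holomorphic frame) — gives $\int_{B_p(r)}|\omega_k|^2\le C_0\,\mu^k$ for $\mu$ independent of $k$, using $\int_M|\omega_0|^2<\infty$ and that $M$ need not have controlled volume growth here because $\omega_0$ is already globally $L^2$. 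Then the Lemma \ref{l2} mechanism applies verbatim with $|u_k|$ replaced by $|\omega_k|$ (the Green's-type identity \eqref{a5} is applied componentwise in a parallel frame along $\partial D(s)$, or one uses the scalar identity for $|\omega_k|^2$ directly), yielding $\int_{\partial D(t)}|\omega_1|^2|\nabla f|^{-1}\le\mu\int_{\partial D(t)}|\omega_0|^2|\nabla f|^{-1}$ on regular level sets.

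Finally I would repeat the endgame of the proof of Theorem \ref{holo'}: the inequality $\int_{D(t_0)}\langle\nabla|\omega_0|^2,\nabla f\rangle\le(1+\mu)\int_{D(t_0)}|\omega_0|^2$, combined with $\int_{D(t_0)}\langle\nabla|\omega_0|^2,\nabla f\rangle=-\int_{D(t_0)}|\omega_0|^2\Delta f+\int_{\partial D(t_0)}|\omega_0|^2|\nabla f|$ and $\inf_{\partial D(t_0)}|\nabla f|>0$, gives a bound on $\int_{\partial D(t_0)}|\omega_0|^2|\nabla f|^{-1}$ by $C(t_0)\int_{D(t_0)}|\omega_0|^2$; integrating over a thin shell of regular values produces the doubling-type inequality $\int_{D(t_0)}|\omega_0|^2\le e^{\varepsilon C(t_0)}\int_{D(t_0-\varepsilon)}|\omega_0|^2$ valid for every $\omega_0\in F(p)$. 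Applying P.~Li's selection lemma to $F(p)$ and Moser iteration (now for the subharmonic function $|\omega_0|$, with a Sobolev constant on the fixed compact set $D(t_0)$) then forces $\dim F(p)\le C$, completing the proof.
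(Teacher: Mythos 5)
There is a genuine gap, and it is exactly at the step you flagged as "the main obstacle": the iterates $\omega_{k+1}:=\nabla_{\nabla f}\omega_k$, with components $(\omega_k)_{\alpha_1\cdots\alpha_q;\gamma}f_{\bar\gamma}$, are \emph{not} holomorphic in general. Differentiating in an anti-holomorphic direction gives
\begin{equation*}
\nabla_{\bar\delta}\left((\omega_k)_{\alpha_1\cdots\alpha_q;\gamma}f_{\bar\gamma}\right)
=(\omega_k)_{\alpha_1\cdots\alpha_q;\gamma\bar\delta}f_{\bar\gamma}
+(\omega_k)_{\alpha_1\cdots\alpha_q;\gamma}f_{\bar\gamma\bar\delta},
\end{equation*}
and while the second term dies by $f_{\alpha\beta}=0$, commuting $\gamma$ and $\bar\delta$ in the first produces $\sum_j R_{\gamma\bar\delta\alpha_j}{}^{\sigma}f_{\bar\gamma}\,(\omega_k)_{\alpha_1\cdots\sigma\cdots\alpha_q}$, which has no reason to vanish or to be $\bar\partial$-exact without curvature hypotheses; the Killing property of $J(\nabla f)$ does not kill it. (For $p=0$ there are no such terms, which is precisely why the scalar iteration of Lemmas \ref{l1}--\ref{l3} works.) The subsequent transplant of Lemma \ref{l2} is also shaky: the Green's identity (\ref{a5}) is applied there to globally defined harmonic functions $u_k$ and $\overline{u_{k+1}}$, and "componentwise in a parallel frame along $\partial D(s)$" does not produce harmonic functions on the domain $D(s)$ to which (\ref{a5}) could be applied.

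The paper sidesteps the iteration entirely. It uses the \emph{interior product} with $\nabla f$, i.e. $\theta^\omega:=\omega(\cdot,\dots,\cdot,\nabla f)$ with components $\omega_{i_1\cdots i_q}f_{\overline{i_q}}$, which lowers the degree from $q$ to $q-1$. Holomorphicity of $\theta^\omega$ is then a pure product-rule computation, $\nabla_{\bar\alpha}(\omega_{i_1\cdots i_q}f_{\overline{i_q}})=0$, with no commutators, and $\theta^\omega\in L^2$ since $|\nabla f|$ is bounded. One inducts on $p$: the map $\omega\mapsto\theta^\omega$ lands in $F(q-1)$, finite-dimensional by induction, so it remains to bound the kernel $F=\{\omega\in F(q):\omega(\cdot,\dots,\cdot,\nabla f)=0\}$. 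For $\omega\in F$ (closed, since it is $L^2$ harmonic) a single computation shows $(\nabla_{i_k}\omega_{i_1\cdots\alpha\cdots i_q})f_{\bar\alpha}=-\omega_{i_1\cdots\alpha\cdots i_q}f_{i_k\bar\alpha}$, whence $|\langle\nabla|\omega|^2,\nabla f\rangle|\le C|\omega|^2$ pointwise with $C$ controlled by the Hessian of $f$; one integration by parts on $D(r)$ then yields $\int_{\partial D(r)}|\omega|^2|\nabla f|^{-1}\le C\int_{D(r)}|\omega|^2$, the shell integration gives the doubling inequality, and the endgame (Li's lemma plus the mean value inequality, here requiring the Bochner formula $\Delta|\omega|\ge -c|\omega|$ on the compact set $D(r_0)$) proceeds as you describe. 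So your final two paragraphs are essentially right, but they are applied to the wrong space: they should be applied only to the kernel $F$ of the contraction map, after the degree-lowering induction, not to all of $F(p)$ via an iteration that does not preserve holomorphicity.
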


\begin{proof}[Proof of Theorem \protect\ref{forms}]
We use induction on $p.$ For $p=0,$ the statement is clear as any $L^{2}$
holomorphic function must be a constant (see \cite{Y4}).

We now assume that the result is true for all $0\leq p\leq q-1$ and prove it
for $p=q.$ Let $F(p)$ denote the vector space of holomorphic $\left(
p,0\right) $ forms in $L^{2}\left( M\right).$ For any $\left( q,0\right) $
form
\begin{equation*}
\omega :=\frac{1}{q!}\omega _{i_{1}...i_{q}}dz^{i_{1}}\wedge ..\wedge
dz^{i_{q}},
\end{equation*}%
we associate a $\left( q-1,0\right) $ form $\theta ^{\omega }$ by
contracting it with $\nabla f.$ So
\begin{eqnarray*}
\theta ^{\omega } &:&=\omega \left( \cdot ,.,\cdot ,\nabla f\right) \\
\theta ^{\omega } &=&\frac{1}{\left( q-1\right) !}\left( \omega
_{i_{1}...i_{q}}f_{\overline{i_{q}}}\right) dz^{i_{1}}\wedge ..\wedge
dz^{i_{q-1}}.
\end{eqnarray*}%
We notice that $\theta ^{\omega }$ is a holomorphic form. Indeed,
\begin{equation*}
\nabla _{\bar{\alpha}}\theta _{i_{1}...i_{q-1}}^{\omega }=\nabla _{\bar{%
\alpha}}\left( \omega _{i_{1}...i_{q}}f_{\overline{i_{q}}}\right) =0
\end{equation*}%
by using that $\omega $ is holomorphic and $f_{\alpha \beta }=0$. Moreover, $%
\theta ^{\omega }$ is in $L^{2}(M)$ as $\left\vert \nabla f\right\vert $ is
bounded on $M.$ By the induction hypothesis, we know that the vector space
\begin{equation*}
\left\{ \theta ^{\omega }:\omega \text{ is }L^{2}\text{ holomorphic }\left(
p,0\right) \text{ form }\right\} \subset F(q-1)
\end{equation*}%
is finite dimensional. Therefore, to finish the proof it suffices to show
that the space
\begin{equation*}
F:=\left\{ \omega \in F(q) :\omega \left( \cdot ,.,\cdot ,\nabla f\right)
=0\right\} \subset F(q)
\end{equation*}%
is finite dimensional as well. For this, we consider $\omega \in F.$ First,
observe that $\omega $ is closed as it is harmonic and in $L^{2}.$ It
follows that
\begin{align}
\left\langle \nabla \left\vert \omega \right\vert ^{2},\nabla f\right\rangle
& =\frac{1}{q!}\left\langle \nabla \left\vert \omega
_{i_{1}...i_{q}}\right\vert ^{2},\nabla f\right\rangle  \label{h2} \\
& =\frac{2}{q!} Re\left\{ \left( \nabla _{\alpha }\omega
_{i_{1}...i_{q}}\right) f_{\bar{\alpha}}\bar{\omega}_{\overline{i_{1}}...%
\overline{i_{q}}}\right\}  \notag \\
& =\frac{2}{q!}Re\left\{ \left( \sum_{k}\varepsilon \left( k\right) \nabla
_{i_{k}}\omega _{i_{1}..\alpha ...i_{q}}\right) f_{\bar{\alpha}}\bar{\omega}%
_{\overline{i_{1}}...\overline{i_{q}}}\right\}  \notag
\end{align}%
for some $\varepsilon\left( k\right) \in \left\{ -1,1\right\} .$ On the
other hand, note that
\begin{eqnarray*}
\left( \nabla _{i_{k}}\omega _{i_{1}..\alpha ...i_{q}}\right) f_{\bar{\alpha}%
} &=&\nabla _{i_{k}}\left( \omega _{i_{1}..\alpha ...i_{q}}f_{\bar{\alpha}%
}\right) -\omega _{i_{1}..\alpha ...i_{q}}f_{i_{k}\bar{\alpha}} \\
&=&-\omega _{i_{1}..\alpha ...i_{q}}f_{i_{k}\bar{\alpha}}.
\end{eqnarray*}%
Therefore, from (\ref{h2}), one concludes
\begin{equation*}
\left\vert \left\langle \nabla \left\vert \omega \right\vert ^{2},\nabla
f\right\rangle \right\vert \leq C\left\vert \omega \right\vert ^{2}
\end{equation*}%
for some constant $C$ depending on the Hessian of $f$ on $D\left( r\right).$

Plugging this into the following equation
\begin{equation}
\int_{D\left( r\right) }\left\langle \nabla \left\vert \omega \right\vert
^{2},\nabla f\right\rangle =-\int_{D\left( r\right) }\left\vert \omega
\right\vert ^{2}\Delta f+\int_{\partial D\left( r\right) }\left\vert \omega
\right\vert ^{2}\left\vert \nabla f\right\vert ,  \label{h1}
\end{equation}%
where $r$ is a regular value of $f,$ we have

\begin{equation}
\int_{\partial D\left( r\right) }\left\vert \omega \right\vert
^{2}\left\vert \nabla f\right\vert ^{-1}\leq C\int_{D\left( r\right)
}\left\vert \omega \right\vert ^{2}  \label{h3}
\end{equation}%
for a constant $C$ depending on the Hessian of $f$ on $D\left( r\right) $
and on a lower bound of $\left\vert \nabla f\right\vert $ on the set $%
\partial D\left( r\right) =\left\{ f=r\right\} .$

For a fixed regular value $r_{0}$ of $f,$ choose $\varepsilon>0$ so that for
$r$ is also a regular value for $r_0-\varepsilon\leq r\leq r_0.$ Integrating
(\ref{h3}) from $r_{0}-\varepsilon $ to $r_{0},$ we obtain that

\begin{equation}
\int_{D\left( r_{0}\right) }\left\vert \omega \right\vert ^{2}\leq
C\,\int_{D\left( r_{0}-\varepsilon \right) }\left\vert \omega \right\vert
^{2}  \label{h4}
\end{equation}
for any $\omega \in F.$

Notice that by the Bochner formula,
\begin{equation*}
\Delta |\omega|\geq -c\, |\omega|
\end{equation*}
on $D(r_0),$ where $c$ depends on the curvature bounds of $D(r_0).$ So a
mean value inequality of the following form holds for the function $%
|\omega|. $
\begin{equation}
\sup_{ D(r_0-\varepsilon)}\left\vert \omega \right\vert ^{2}\leq \frac{C}{%
V(D(r_0))}\, \int_{D(r_0)}\left\vert \omega\right\vert ^{2}
\end{equation}%
Together with (\ref{h4}), this implies that $F$ is finite dimensional as
indicated in the proof of Theorem \ref{holo}. The dimension of $F$ depends
on $q,$ $r_{0},$ the bounds of $f$ on $D\left( r_{0}\right) $ and also on a
curvature bound on $D\left( r_{0}\right).$ This proves the theorem.
\end{proof}

\bigskip

\noindent{\small DEPARTMENT OF MATHEMATICS}\newline
{\small UNIVERSITY OF CONNECTICUT}\newline
{\small STORRS, CT 06269}\newline
{\small E-mail address: ovidiu.munteanu@uconn.edu}

\bigskip

\noindent{\small SCHOOL OF MATHEMATICS}\newline
{\small UNIVERSITY OF MINNESOTA }\newline
{\small MINNEAPOLIS, MN 55455} \newline
{\small E-mail address: jiaping@math.umn.edu}

\end{document}